\newcommand{\R}{\mathbb{R}}
\newcommand{\proba}{\mathbb{P}}
\begin{document}

\section{Introduction}

The field of spin glasses \cite{mezard1987spin} is a branch of statistical mechanics that started as a way to explain strange magnetic behaviour of disordered materials. More recently, there have also been a wide variety of inference problems that were successfully studied by applying its tools and heuristics \cite{mezard2009information,zdeborova2016statistical}. Some of the strategies used to approach these problems include belief-propagation and approximate message passing algorithms \cite{donoho2013information,kabashima2003cdma}, along with the cavity \cite{aizenman2003extended,coja2018information,mezard1987spin,panchenko2013sherrington,tala2010libro}, interpolation \cite{guerra2002thermodynamic,tala2010libro}, and adaptive interpolation methods \cite{jean2019adaptive,jean2019adaptive2}. All of these allow to establish the asymptotic log-partition function of physical and information processing systems and the performance of Bayesian estimators but rely, in most cases, on the randomness defining the model (the ``quenched disorder'') being a collection of i.i.d. and/or normal random variables.

However, in many applications the disorder present in the system may come from a distribution with some underlying complex structure. This makes inadequate the approaches that require it to be i.i.d./normally distributed. The particular attention that models with i.i.d./normal disorder have received is mainly because many of the tools developed in the field of spin glasses require these types of distributions. Thus, at present, there are ongoing efforts to extend the analysis of these models to systems where the disorder is given by more general \emph{rotationally invariant matrices} \cite{bhattacharya2016high,fan2020AMProt,fan2021ortho,kurchan2021annealed,gabrie2019entropy,gerbelot2020asymptotic,ma2021analysis,maillard2019hightemp,parisi1995mean,rangan2019vector,takahashi2020macroscopic,takeda2006analysis}. This is a family of random matrices that have very general distributions: many classical random matrix ensembles can be thought of as particular cases of them. However, very little is known about the high-dimensional limit of problems involving this type of correlated disorder, especially on the rigorous side.

Here we establish the weak convergence of the finite marginals of the Gibbs measure associated to a symmetric spherical model in the high-temperature phase, with explicit rates of convergence expressed in terms of the concentration rate of the energy per variable. We also prove an upper bound for the variance of functions of the coordinates of the model under a more restrictive high-temperature condition. Finally, we apply these results to study the limit of empirical means of coordinate-wise functions of samples of the model.

One objective of the present work is to apply the cavity method to a disordered system with couplings that are not i.i.d./normally distributed. As far as we know, this is the first example of a rigorous use of the cavity method for a model with such correlated disorder. By doing this, we extend results on the spherical spin glass model which has been extensively studied for normal disorder \cite{belius2019tap,chen2013aizenman,crisanti1993sphericalp,crisanti1992spherical,kosterlitz1976spherical,subag2018free,talagrand2006free}. As discussed in \cite{maillard2019hightemp}, this model is a good play-ground for developing tools to study more complicated systems with rotationally invariant couplings. But the results presented here also have an interest in themselves as this model is also closely related to the large deviation theory of random matrices. Indeed, the free energy limit of the model studied in this paper has been rigorously obtained in \cite{guionnet2005fourier} and dynamical results for a relaxed version of the model are given in \cite{arous2001aging}. In the latter reference the weak limit of the marginals of this relaxed version of the symmetrical spherical model are established. This result differs from ours in that the techniques used are very different and in the fact that they prove it for a simpler model with a \emph{soft spherical constraint}. Finally, owing to its connection with high-dimensional inference problems with non-normal disorder, the model has recently been the object of a renewed interest. In \cite{kurchan2021annealed}, the authors examined the phenomenology of an annealed version of the model, and in \cite{maillard2019hightemp} some high-temperature expansion is provided. See also \cite{bhattacharya2016high} for further rigorous results. Finally, in \cite{fan2020AMProt,fan2021ortho,gerbelot2020asymptotic} related models were studied from an algorithmic perspective.

\section{Description of the model and notation}\label{sec:setting}

Let $N\geq1$. A real symmetric \emph{rotationally invariant matrix} $J\in\R^{N\times N}$ is a random matrix such that $J = O^\intercal D O$ where $O$ is distributed according to the Haar measure over the orthogonal matrices in $\R^{N\times N}$ and $D = \mbox{diag}(\gamma^{(N)}_1,\dots,\gamma^{(N)}_N)\in\R^{N\times N}$. The \emph{symmetric spherical model} is defined by the Hamiltonian $N s^\intercal J s$, with $J$ a quenched rotationally invariant coupling matrix and $s\in S^{N-1}$ a spins vector on the unit sphere of dimension $N$.

From now on we assume the following hypothesis on the eigenvalues $(\gamma^{(N)}_i)_{i\leq N}$.
\begin{hypothesis}[Eigenvalues]\label{hyp:eigenvalues}
    There is some fixed interval $I\subseteq\R$ for which for all $N,i\geq1$ we have $\gamma_i^{(N)}\in I$. Moreover, the measure $N^{-1} \sum_{i\leq N} \delta_{\gamma^{(N)}_i}$ converges in the 1-Wasserstein metric towards a limiting distribution $\rho(\cdot)$.
\end{hypothesis}
To ease the notation, from now on we will omit the superscript $N$ in $\gamma_i^{(N)}$. Notice that we can rotate the original vector $s$ by $O$ to obtain a new spherical vector $s'$ and Hamiltonian $N\sum_i \gamma_i (s_i')^2$. Because the distribution of a uniform vector $s'$ over the sphere $S^{N-1}$ is equal to that of $g/\|g\|$ with $g=(g_1,\dots,g_N)\in\R^N$ a standard normal vector of dimension $N$, we can alternatively define the Hamiltonian according to
\begin{equation}
    H_N(g) :=   N   \sum_{i\leq N} \gamma_i \frac{g_i^2}{\|g\|^2}.
\end{equation}
Because the model is invariant under a constant shift of all the eigenvalues $\gamma_1,\dots,\gamma_N$, we can assume without loss of generality that the eigenvalues $(\gamma_i)_{i\leq N}$ are contained in the non-negative interval $[0,\tilde\gamma]$ for some $\tilde\gamma >0$. We will also set $\langle \cdot \rangle$ as its corresponding Gibbs mean with respect to a standard normal measure on $\R^N$, which we will denote by $G(\cdot)$. That is, for every integrable function $f:\R^N\to\R$ we define
\begin{equation}
    \langle f(g) \rangle := \frac{1}{Z_N}\int_{\R^N}dG(g) f(g) \exp \theta H_N(g)\label{GibbsMean}
\end{equation}
where $\theta\in\R$ is the \emph{inverse temperature} of the model. For simplicity, we will refer to it as the  \emph{temperature} from now on. Here $Z_N$ is a normalising constant which we will refer to as the \emph{partition function}. Whenever we want to emphasise the dependence on the temperature, we will write $\langle\cdot\rangle_\theta$ and $Z_N(\theta)$ instead. We will also define the intensive free energy and energy of the model according to $f_N := N^{-1} \ln Z_N$ and $h_N := N^{-1} H_N$, respectively.

Let $S_\rho : \R\backslash[0,\tilde\gamma] \to \R$ be the Stieltjes transform of the distribution $\rho(\cdot)$ given by
\begin{equation*}
    S_\rho(z) := \int_0^{\tilde\gamma} \frac{\rho(d\gamma)}{z-\gamma}.
\end{equation*}
Define also $S_{max} := \lim_{z\downarrow\tilde\gamma} S_\rho(z)$ and $S_{min} := \lim_{z\uparrow 0} S_\rho(z)$ which can be infinite. Because $S_\rho$ is a bijection between $\R\backslash[0,\tilde\gamma]$ and its image $(S_{min},S_{max})\backslash\{0\}$, it has some inverse 
\begin{equation*}
    K_\rho :(S_{min},S_{max})\backslash\{0\}\to\R\backslash[0,\tilde\gamma].
\end{equation*}
We will then define the \emph{R-transform of} $\rho(\cdot)$ as the function
\begin{equation*}
    R_\rho:(S_{min},S_{max})\backslash\{0\}\to\R\backslash[0,\tilde\gamma] \quad \mbox{given by} \quad R_\rho(x) := K_\rho(x) - 1/x.
\end{equation*}
For any $\theta\in(S_{min}/2,S_{max}/2)\backslash\{0\}$ we  set $v(\theta) := R_\rho(2\theta)$. We will sometimes omit the $\theta$ in the argument and just write $v$. From now on we denote the ``high-temperature region'' $T_\rho:=(S_{min}/2,S_{max}/2)\backslash\{0\}$. Note that for $\theta=0$ the model is anyway trivial.

Throughout the paper, most results will be given in terms of the rate of convergence in $L^2$ of the mean of the intensive energy towards its limit, which we will denote by
\begin{equation}\label{eq:def_aN}
    a_N=a_N(\theta) := \big\langle (h_N - v(\theta))^2\big\rangle_\theta.
\end{equation}
Here the subscript $\theta$ in the mean $\langle\cdot\rangle_\theta$ indicates that it is with respect to the Gibbs measure at temperature $\theta$. The reader should keep in mind that, as we prove in {Lemma\phantom{ }\ref{lem:en_contr}}, in the setting considered $a_N$ is always a vanishing sequence.

\section{Main results and application to sample means}\label{sec:main}

There are two main results. The first one gives the convergence of the means of bounded functions of the vector $g$ towards a measure where a single spin is decoupled from the rest. It also provides an explicit convergence rate as a function of $a_N$. 

To state the result we will first define a new Hamiltonian where the $d$-th spin is decoupled from the rest:
\begin{equation}\label{eq:def_Hd}
    H_d(g) := N \sum_{i\leq N,\ i\neq d} \gamma_i \frac{g_i^2}{\|\bar g\|^2} - ( v(\theta) - \gamma_d) g_d^2,
\end{equation}
where $d\leq N$ is any spin index and we let $\bar g:=(g_1,\dots,g_{d-1},g_{d+1},\dots,g_{N})\in\R^{N-1}$. In other words, we have $H_d(g) = {N}(N-1)^{-1} H_{N-1}(\bar g) - ( v - \gamma_d) g_d^2$. In a similar way as before, we denote by $\langle\cdot\rangle_d$ the expectation with respect to the Gibbs measure associated to this Hamiltonian $H_d$ and at temperature $\theta$ under the standard normal measure $G(\cdot)$ on $\R^N$ (i.e., it is defined by replacing $H_N$ by $H_d$ in \eqref{GibbsMean}). 

\begin{theorem}[Decoupling in the high-temperature phase]\label{thm:asymptotic_mean}
    Let $(\tilde a_N)_{N\geq1}$ be the sequence given by $\tilde a_N(\theta) := 1/\sqrt N+\sqrt{a_N(\theta)}$ and $d\geq1$. For any finite $\theta\in T_\rho$ there exists some constant $K(\theta) > 0$ such that, for every bounded function $f:\R^N\to \R$,
    \begin{equation*}
        \big| \langle f(g) \rangle - \langle f(g) \rangle_d \big| \leq  K(\theta) \|f\|_\infty \tilde a_N(\theta).
    \end{equation*}
\end{theorem}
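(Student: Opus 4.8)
The plan is to run a one-spin cavity comparison, controlled by the energy concentration and by a non-condensation property of the Gibbs measure. Write $\Delta := H_N - H_d$. Rewriting the Gibbs weight of $H_d$ relative to that of $H_N$ gives $\langle f\rangle_d = \langle f e^{-\theta\Delta}\rangle/\langle e^{-\theta\Delta}\rangle$, hence
\[
\langle f\rangle - \langle f\rangle_d \;=\; -\,\frac{\big\langle f\,\big(e^{-\theta\Delta}-\langle e^{-\theta\Delta}\rangle\big)\big\rangle}{\langle e^{-\theta\Delta}\rangle}
\qquad\Longrightarrow\qquad
\big|\langle f\rangle - \langle f\rangle_d\big| \;\le\; \frac{2\,\|f\|_\infty\,\big\langle\,|e^{-\theta\Delta}-1|\,\big\rangle}{\langle e^{-\theta\Delta}\rangle}.
\]
Since $e^{-\theta\Delta}>0$ and $\langle e^{-\theta\Delta}\rangle = Z_N(\theta)^{-1}\,Z_{N,d}(\theta)$, with $Z_{N,d}$ the partition function of $H_d$, once we know $\langle|\Delta|\rangle\to0$ Jensen's inequality forces $\langle e^{-\theta\Delta}\rangle\ge e^{-\theta\langle\Delta\rangle}\ge\tfrac12$ for $N$ large, so the denominator is harmless and the problem reduces to showing $\langle|e^{-\theta\Delta}-1|\rangle\le K(\theta)\,\tilde a_N(\theta)$; the finitely many small $N$ are absorbed into $K(\theta)$ via the trivial bound $|\langle f\rangle-\langle f\rangle_d|\le 2\|f\|_\infty$.

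A direct computation isolating the $d$-th coordinate (using $\|g\|^2=\|\bar g\|^2+g_d^2$ and $h_{N-1}(\bar g)=(N-1)^{-1}H_{N-1}(\bar g)$) gives the exact identity
\[
\Delta \;=\; g_d^2\,B, \qquad B \;:=\; \big(v-h_{N-1}(\bar g)\big)\;+\;\frac{\big(N-\|g\|^2\big)\,\big(\gamma_d-h_{N-1}(\bar g)\big)}{\|g\|^2},
\]
with $\gamma_d,h_{N-1}(\bar g)\in[0,\tilde\gamma]$ and $|h_N(g)-h_{N-1}(\bar g)|\le\tilde\gamma\,g_d^2/\|g\|^2$. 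Three facts drive the estimates. First, because $H_N$ depends on $g$ only through the direction $g/\|g\|$, under $\langle\cdot\rangle$ the radius $\|g\|$ is independent of $g/\|g\|$ and $\|g\|^2$ has exactly the $\chi^2_N$ law; hence $\langle\big((N-\|g\|^2)/\|g\|^2\big)^2\rangle=O(1/N)$ — the source of the $1/\sqrt N$ in $\tilde a_N$ — and $\langle g_d^{2k}\rangle=\langle\|g\|^{2k}\rangle\,\langle(g_d/\|g\|)^{2k}\rangle$. Second, $h_{N-1}(\bar g)$ inherits the concentration of $h_N(g)$ around $v$, so that, together with $\langle(g_d/\|g\|)^4\rangle=O(1/N^2)$ (a consequence of the third fact),
\[
\langle B^2\rangle \;\le\; 2\langle(v-h_N)^2\rangle + C\,\langle(g_d/\|g\|)^4\rangle + C\,\big\langle\big((N-\|g\|^2)/\|g\|^2\big)^2\big\rangle \;\le\; C(\theta)\big(a_N + 1/N\big) \;=\; C(\theta)\,\tilde a_N^2 .
\]
Third, and this is the crux, the single cavity direction does not condense: for $\theta\in T_\rho$ the tilted marginal of $\omega_d:=g_d/\|g\|$ has $\omega_d^2$ concentrated at $0$ at scale $1/N$, so $\langle\omega_d^{2k}\rangle=O(N^{-k})$, all polynomial moments $\langle g_d^{2k}\rangle$ are $O(1)$, and there is a positive $\theta$-dependent threshold $\lambda^*(\theta)$ — uniform in $N$ and in $\gamma_d$ — with $\langle e^{\lambda g_d^2}\rangle\le C(\theta)$ for $\lambda\le\lambda^*(\theta)$. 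The role of $\theta\in T_\rho$ is the strict positivity, uniform over $\gamma_d\in[0,\tilde\gamma]$, of the stiffness $1-2\theta(\gamma_d-v)$: a short computation gives $1-2\theta(\gamma_d-v)=2\theta\big(K_\rho(2\theta)-\gamma_d\big)$, and for $\theta\in T_\rho$ one has $K_\rho(2\theta)>\tilde\gamma$ when $\theta>0$ and $K_\rho(2\theta)<0$ when $\theta<0$, so $\theta\big(K_\rho(2\theta)-\gamma_d\big)$ is bounded below by a positive constant; this is exactly when the conditional law of $g_d$ given $\bar g$ — proportional to $\exp\!\big(-t^2/2+\theta N t^2(\gamma_d-h_{N-1}(\bar g))/(\|\bar g\|^2+t^2)\big)$ — is effectively a non-degenerate Gaussian on the typical set of $\bar g$.

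Granting the three facts, one concludes by a truncation. Fix a small constant $\epsilon=\epsilon(\theta)>0$ chosen so that $g_d^4 e^{2|\theta|\epsilon g_d^2}$ has Gibbs mean bounded uniformly in $N$, and split according to $\mathcal E:=\{|B|\le\epsilon\}$. On $\mathcal E$ one has $|\Delta|\le\epsilon\,g_d^2$, so $|e^{-\theta\Delta}-1|\le|\theta|\,g_d^2|B|\,e^{|\theta|\epsilon g_d^2}$, and by Cauchy--Schwarz with the moment bounds of the third fact, $\langle|e^{-\theta\Delta}-1|\,\mathbb{1}_{\mathcal E}\rangle\le|\theta|\,\langle g_d^4 e^{2|\theta|\epsilon g_d^2}\rangle^{1/2}\,\langle B^2\rangle^{1/2}\le C(\theta)\,\tilde a_N$. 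On $\mathcal E^c$, Chebyshev gives $\langle\mathcal E^c\rangle\le\langle B^2\rangle/\epsilon^2\le C(\theta)\tilde a_N^2$; since $|e^{-\theta\Delta}-1|\le e^{-\theta\Delta}+1$ it remains to bound $\langle e^{-\theta\Delta}\mathbb{1}_{\mathcal E^c}\rangle$, for which one uses the change of measure $e^{-\theta\Delta}\,\langle\,\cdot\,\rangle=(Z_{N,d}/Z_N)\,\langle\,\cdot\,\rangle_d$: under $\langle\,\cdot\,\rangle_d$ the coordinate $g_d$ is exactly a centred Gaussian of variance $(1-2\theta(\gamma_d-v))^{-1}$, independent of $\bar g$, with $\|\bar g\|^2\sim\chi^2_{N-1}$, so $B$ again concentrates at $0$; and $Z_{N,d}$ factorizes into an $(N-1)$-spin partition function at the nearby temperature $\theta N/(N-1)$ times the Gaussian integral $(1-2\theta(\gamma_d-v))^{-1/2}$, whence $Z_{N,d}/Z_N\le C(\theta)$. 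This yields $\langle e^{-\theta\Delta}\mathbb{1}_{\mathcal E^c}\rangle=(Z_{N,d}/Z_N)\langle\mathbb{1}_{\mathcal E^c}\rangle_d\le C(\theta)\,\tilde a_N^2$. Altogether $\langle|e^{-\theta\Delta}-1|\rangle\le C(\theta)\,\tilde a_N$, which is the claim.

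The main obstacle is the third fact: establishing, uniformly in $N$ and in the eigenvalue $\gamma_d$, that a single direction of the sphere cannot carry an anomalous share of the mass under the correlated Gibbs measure — a statement equivalent to the strict positivity of $1-2\theta(\gamma_d-v)$ throughout $T_\rho$ — and extracting from it the polynomial and thresholded exponential moment bounds for $g_d$. A secondary, bookkeeping difficulty is to treat the atypical cavity fields (the event $\mathcal E^c$) without degrading the $\sqrt{a_N}$ rate, which is why the tail is routed through the decoupled measure $\langle\cdot\rangle_d$, where the cavity coordinate is exactly Gaussian, rather than through a crude exponential moment of $e^{-\theta\Delta}$ under $\langle\cdot\rangle$ — the latter need not be bounded uniformly in $N$ when $\theta$ approaches the boundary of $T_\rho$.
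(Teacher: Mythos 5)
Your cavity decomposition is exactly the paper's: your $\Delta=g_d^2B$ coincides with the perturbation $\epsilon_N$ in \eqref{eq:error_ham}, your stiffness identity $1-2\theta(\gamma_d-v)=2\theta(K_\rho(2\theta)-\gamma_d)$ is the content of Lemma \ref{lem:equiv_cond}, and your first two "facts" are Lemma \ref{lem:dist_NN} and the energy concentration. The genuine gap is your third fact, which you yourself label the crux and then assert rather than prove: the comparison step you chose (the direct change of measure $\langle f\rangle_d=\langle f e^{-\theta\Delta}\rangle/\langle e^{-\theta\Delta}\rangle$ plus the bound $|e^{-\theta\Delta}-1|\le|\theta|g_d^2|B|e^{|\theta|\epsilon g_d^2}$ on $\mathcal E$) forces you to control $\langle g_d^4 e^{2|\theta|\epsilon g_d^2}\rangle$ uniformly in $N$ and $\gamma_d$, throughout all of $T_\rho$. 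Strict positivity of $1-2\theta(\gamma_d-v)$ does not by itself yield this: it tells you what the limiting marginal should be, but a uniform-in-$N$ exponential (or even polynomial) moment bound for $g_d$ under the \emph{interacting} measure $\langle\cdot\rangle$ requires a real argument — one must rule out that an order-one fraction of the tilted spherical mass sits on the $d$-th direction for finite $N$, which amounts to a large-deviation/tightness estimate for $\omega_d^2=g_d^2/\|g\|^2$ under the tilted sphere, or an induction on moments. The paper needs only the polynomial bounds and proves them by a nontrivial Gaussian integration-by-parts induction (Lemma \ref{lem:bound_gN4}), using the energy concentration and Laurent--Massart; nothing in the paper gives the thresholded exponential moment your route needs, so as written your proof rests on an unestablished (and strictly stronger) input.

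Two secondary points in the tail term also need repair. The claim $Z_{N,d}/Z_N\le C(\theta)$ "by factorization" is not justified: $\ln Z_{N,d}-\ln Z_N$ is a difference of extensive quantities, and convergence of the free energy density without a rate does not bound it; the correct route is Jensen in both directions ($Z_{N,d}/Z_N=\langle e^{-\theta\Delta}\rangle\le e^{-\theta\langle\Delta\rangle_d}$ after swapping measures), which again needs moment bounds — these are easy under $\langle\cdot\rangle_d$ since $g_d$ is exactly Gaussian there, but you must also show $\langle B^2\rangle_d\le C\tilde a_N^2$, i.e.\ concentration of $V_N$ under the decoupled measure \emph{with a rate tied to $a_N$}; the paper gets this by a Gronwall comparison between $\langle\cdot\rangle_d$ and the $(N-1)$-spin original measure (Lemma \ref{lem:conc_eng_d}), and you assert it without argument. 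A clean way to salvage your write-up is to replace the exponential change of measure by the paper's interpolation bound (Proposition \ref{prop:pert_cont}, inequality \eqref{ConcId}), which reduces everything to $\langle g_d^2|B|\rangle$ and $\langle g_d^2|B|\rangle_d$, i.e.\ Cauchy--Schwarz with fourth moments only — precisely the quantities that the integration-by-parts induction and the Gronwall comparison control.
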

One important observation is that this result is established for the largest possible interval of values of the temperature $\theta$. Indeed, as discussed in \cite{guionnet2005fourier}, if $\theta\not\in T_\rho$ the coordinate associated to the largest eigenvalue is of order $N$. This implies that, if a limiting marginal exists, it will not have a finite mean.

Note that, given $a_N$, because the dependence of the convergence rate on $f$ is explicit, it can be extended to non-bounded functions with a sufficiently slow growth rate. This can be achieved by a canonical approximation argument, as we do in the example below.

Now, define for every $\gamma\in[0,\tilde \gamma]$ the probability measure $\mu_\gamma(\cdot)$ as the law of a centred normal random variable of variance $\sigma_\gamma^2 := 1/(1+2 \theta( v(\theta) - \gamma))$. We then have the following result that establishes the weak convergence of the finite marginals of finite sets of coordinates from the model. This follows directly from $k$ applications of Theorem \ref{thm:asymptotic_mean}.

\begin{corollary}[Finite marginals]\label{cor:marginals}
    If $\theta\in T_\rho$, then for every $k\geq1$ and $\{d_1,\dots,d_k\}\subseteq [N]$ a subset of $k$ distinct indices we have that for all $f:\R^k\to\R$ bounded there exists a constant $K'(\theta,k)>0$ such that, if $(\tilde a_N)_{N\geq1}$ is as in Theorem \ref{thm:asymptotic_mean}, 
    \begin{equation*}
        \Big|\langle f(g_{d_1},\dots,g_{d_k})\rangle -  \int_{\R^k} d\mu_{\gamma_{d_1}}(x_1)\cdots d\mu_{\gamma_{d_k}}(x_k) f(x_1,\dots,x_k)  \Big|\le K'(\theta,k) \|f\|_\infty \tilde a_N(\theta).
    \end{equation*}
\end{corollary}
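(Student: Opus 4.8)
The plan is an induction on $k$, peeling off the coordinates $d_1,\dots,d_k$ one at a time by means of Theorem~\ref{thm:asymptotic_mean} --- the ``$k$ applications'' announced above. It is convenient to carry the inductive statement in a form uniform in $\theta$ over compact subsets of $T_\rho$, which is harmless since $K(\theta)$ in Theorem~\ref{thm:asymptotic_mean} may be taken locally bounded. The base case $k=1$ is Theorem~\ref{thm:asymptotic_mean} together with the observation that, under $\langle\cdot\rangle_d$, the decoupled coordinate is \emph{exactly} distributed as $\mu_{\gamma_d}$ and independent of the others: by \eqref{eq:def_Hd} the only dependence of $H_d$ on $g_d$ is through the term $-(v-\gamma_d)g_d^2$, so integrating $dG(g)\exp\theta H_d(g)$ over $g_d$ produces the Gaussian factor $\exp(-(1+2\theta(v-\gamma_d))g_d^2/2)$, that is, a centred normal of variance $1/(1+2\theta(v-\gamma_d))$ decoupled from $\bar g$. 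Hence $\langle f(g_{d_1})\rangle_{d_1}=\int d\mu_{\gamma_{d_1}}(x_1)f(x_1)$ and the claim holds with $K'(\theta,1)=K(\theta)$. One records here that this variance is positive for every $\theta\in T_\rho$ and $\gamma\in[0,\tilde\gamma]$: since $v=K_\rho(2\theta)-1/(2\theta)$, one has $1+2\theta(v-\gamma)=2\theta(K_\rho(2\theta)-\gamma)$, which is positive both when $\theta>0$ (as then $K_\rho(2\theta)>\tilde\gamma$) and when $\theta<0$ (as then $K_\rho(2\theta)<0$), by the sign of $S_\rho$ on the two components of $\R\backslash[0,\tilde\gamma]$.

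For the inductive step, apply Theorem~\ref{thm:asymptotic_mean} with $d=d_k$, so that $|\langle f(g_{d_1},\dots,g_{d_k})\rangle-\langle f(g_{d_1},\dots,g_{d_k})\rangle_{d_k}|\le K(\theta)\|f\|_\infty\tilde a_N(\theta)$. By the base-case observation, under $\langle\cdot\rangle_{d_k}$ the variable $g_{d_k}$ is an independent $\mu_{\gamma_{d_k}}$, and, through $H_{d_k}(g)=N(N-1)^{-1}H_{N-1}(\bar g)-(v-\gamma_{d_k})g_{d_k}^2$, the law of $\bar g=(g_i)_{i\neq d_k}$ under $\langle\cdot\rangle_{d_k}$ is exactly the Gibbs measure of the $(N-1)$-dimensional spherical model with eigenvalues $(\gamma_i)_{i\neq d_k}$ at the shifted temperature $\hat\theta_N:=N(N-1)^{-1}\theta$; this smaller model has the same limiting spectral distribution $\rho$, hence the same $T_\rho$, $v(\cdot)$ and Gaussians $\mu_\gamma$. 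Since $T_\rho$ is open and $\hat\theta_N\to\theta$, for $N$ large $\hat\theta_N$ lies in a fixed compact subset of $T_\rho$. Conditioning on $g_{d_k}=x_k$, applying the inductive hypothesis to this $(N-1)$-model with the bounded function $(x_1,\dots,x_{k-1})\mapsto f(x_1,\dots,x_{k-1},x_k)$ and the indices $d_1,\dots,d_{k-1}$, and then integrating $x_k$ against $\mu_{\gamma_{d_k}}$, one obtains the bound claimed by the Corollary, except that $\prod_{l<k}\mu_{\gamma_{d_l}}$ is replaced by $\prod_{l<k}\mu^{\hat\theta_N}_{\gamma_{d_l}}$ (the Gaussians attached to the temperature $\hat\theta_N$) and the error term is $K'(\hat\theta_N,k-1)\|f\|_\infty\tilde a_{N-1}(\hat\theta_N)$. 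Replacing each $\mu^{\hat\theta_N}_{\gamma_{d_l}}$ by $\mu_{\gamma_{d_l}}$ costs $O(|\hat\theta_N-\theta|)=O(1/N)$ each in total variation --- $\theta\mapsto v(\theta)=R_\rho(2\theta)$ being analytic on $T_\rho$, the variances $1/(1+2\theta(v(\theta)-\gamma))$ are bounded away from $0$ and $\infty$ and Lipschitz in $\theta$, uniformly over $\gamma\in[0,\tilde\gamma]$ --- contributing at most $O_k(\|f\|_\infty/N)\le\|f\|_\infty\tilde a_N(\theta)$ in total.

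The crux is the final bookkeeping. Iterating, one replaces the $N$-model successively by models of dimensions $N,N-1,\dots,N-k+1$ at temperatures $\theta_j:=N(N-j+1)^{-1}\theta$, and the telescoped error reads $\sum_{j=0}^{k-1}K(\theta_{j+1})\|f\|_\infty\tilde a_{N-j}(\theta_{j+1})+O_k(\|f\|_\infty/N)$; one must show this is $\le K'(\theta,k)\|f\|_\infty\tilde a_N(\theta)$. For $N$ large all the $\theta_j$ sit in a fixed compact subset of $T_\rho$, so the constants $K(\theta_{j+1})$ and $K'(\theta_{j+1},\cdot)$ are bounded uniformly in $N$, and the $1/\sqrt{N-j}$ contributions inside $\tilde a_{N-j}$ are plainly $O(\tilde a_N(\theta))$; the genuine point is to bound $a_{N-j}(\theta_{j+1})$ by $C(\theta,k)\,a_N(\theta)+C(\theta,k)/N$, i.e.\ to show that the $L^2$ energy-concentration rate $a_N$ is stable under a bounded change of dimension together with an $O(1/N)$ change of temperature. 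This I would deduce from the quantitative content of Lemma~\ref{lem:en_contr}, using the identities $\langle h_N\rangle_\theta=f_N'(\theta)$ and $\Var_\theta(h_N)=f_N''(\theta)/N$ (so that $a_N(\theta)=f_N''(\theta)/N+(f_N'(\theta)-v(\theta))^2$, the latter square being the source of the rate, with $|f_N'(\theta)-v(\theta)|\le\sqrt{a_N(\theta)}$ by Jensen's inequality) together with the uniform-in-dimension-and-temperature bounds on $f_N,f_N',f_N''$ and on the convergence $f_N'\to v$ provided there; this stability is the step I expect to require the most care. Finally, the finitely many small values of $N$ left out by the ``$N$ large'' provisos are disposed of by enlarging $K'(\theta,k)$, since there $\tilde a_N(\theta)\ge1/\sqrt N$ is bounded below while the quantity to be estimated never exceeds $2\|f\|_\infty$.
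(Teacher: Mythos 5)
Your strategy---peeling off one coordinate at a time via Theorem~\ref{thm:asymptotic_mean}---is exactly the iteration the paper intends by ``$k$ applications'', and most of the proposal is sound: the exactness of the decoupled marginal $\mu_{\gamma_d}$ under $\langle\cdot\rangle_d$ (with the positivity of $1+2\theta(v-\gamma)=2\theta(K_\rho(2\theta)-\gamma)$ argued correctly), the identification of the law of $\bar g$ under $\langle\cdot\rangle_{d_k}$ as the $(N-1)$-system at temperature $N\theta/(N-1)$, and the $O(1/N)$ total-variation cost of moving the Gaussians back to the original temperature. The genuine problem is the step you yourself flag but do not carry out: closing the induction requires $a_{N-j}(\theta_{j+1})\le C(\theta,k)\,(a_N(\theta)+1/N)$, i.e.\ a quantitative comparison of the energy-concentration rates of \emph{different} systems (one eigenvalue removed, temperature shifted by $O(1/N)$). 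This cannot be ``deduced from the quantitative content of Lemma~\ref{lem:en_contr}'', because that lemma has none of the required quantitative content: the convergence $\langle h_N\rangle_\theta\to v(\theta)$ is extracted from the rate-free limit \eqref{eq:guio_limit} through the convexity bound \eqref{eq:conv_bound} with an arbitrary $\delta$, so it yields no rate for $f_N'(\theta)-v(\theta)$, no uniformity in $(N,\theta)$ of the kind you invoke, and certainly no inequality relating $f_{N-1}''(\hat\theta_N)/(N-1)$ to $f_N''(\theta)/N$; nor does such stability follow formally from the mere vanishing of both sequences $a_N(\theta)$ and $a_{N-1}(\hat\theta_N)$, which is all the lemma gives. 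As written, the telescoped error is bounded by subsystem rates that you cannot convert into the $\tilde a_N(\theta)$ appearing in the statement, so the induction does not close without a separate comparison argument (e.g.\ an interpolation in the spirit of Lemma~\ref{lem:conc_eng_d} between the subsystem measure and the full one, together with control of the mean shift), which the proposal does not supply.

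For what it is worth, the paper itself offers no more than ``follows directly from $k$ applications'', so you have put your finger on a point it glosses over; but the drift in size and temperature can be avoided altogether by decoupling the $k$ coordinates \emph{simultaneously} rather than sequentially. Set $H_{d_1,\dots,d_k}(g):=NV^{(k)}_N-\sum_{j\le k}(v-\gamma_{d_j})g_{d_j}^2$ with $V^{(k)}_N$ the energy of the remaining $N-k$ coordinates, write $H_N=H_{d_1,\dots,d_k}+\epsilon_N$ where $\epsilon_N$ is the sum of $k$ cavity terms of the form \eqref{eq:error_ham}, and apply Proposition~\ref{prop:pert_cont} once. All expectations are then taken under $\langle\cdot\rangle$ and under the $k$-decoupled measure at the \emph{original} $(N,\theta)$, the decoupled measure factorises so the limit object is exactly $\prod_{l\le k}\mu_{\gamma_{d_l}}$, and the needed estimates are verbatim the analogues (with constants picking up a factor $k$) of Lemmas~\ref{lem:dist_NN}, \ref{lem:bound_gN4}, \ref{lem:conc_NNd} and \ref{lem:conc_eng_d} used in the proof of Theorem~\ref{thm:asymptotic_mean}. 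This puts the Corollary on precisely the same footing as the Theorem and removes the need for the stability estimate that is the weak point of your iteration.
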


\begin{remark}[High-temperature condition]
Notice that if the mass that the measure $\rho(\cdot)$ gives to the balls $B_\varepsilon(0)$ and $B_\varepsilon(\tilde\gamma)$ decays slowly enough when $\varepsilon\to0^+$, then the interval $T_\rho$ will be equal to $\R\backslash\{0\}$. For example, this is easily seen to be the case when $\rho(\cdot)$ is a convex combination of Dirac measures over different points. In these cases, our result characterises the marginals in the whole regime of temperatures of the model. 
\end{remark}

\begin{remark}[Decoupling and marginals for spherical vectors]
The rapid concentration of $\|g\|$ under $\langle\cdot\rangle$ given by Lemma~\ref{lem:dist_NN} below implies that the components of the random spherical vector $s'$ given by $s_i'=g_i/\|g\|$ are close in distribution, under $\langle \cdot\rangle$, to $g_i/\sqrt N$. Therefore, it is not hard to show that the above results extend when replacing $g_{d_i}$ by $\sqrt N s_{d_i}'$, at least for Lipschitz functions $f$.
\end{remark}

\paragraph{Application to sample means.} Let $f:\R\to\R$ a Lipschitz function with Lipschitz constant $\mbox{Lip}(f)\leq1$. We study the asymptotic behaviour of the sample mean $F(g) := N^{-1} \sum_{i\leq N}f(g_i)$. Without loss of generality we assume that $f(0) = 0$. We also fix $\theta\in T_\rho$.

First observe that
\begin{equation*}
    \Big|\langle F(g)\rangle -  \frac{1}{N}\sum_{d \leq N} \langle f(g_d)\rangle_d\Big| \leq  \frac{1}{N}\sum_{d \leq N}|\langle f(g_d)\rangle - \langle f(g_d)\rangle_d|.
\end{equation*}
Also, define for each $m\geq1$ the function $\bar f_m : \R\to[-m,m]$ such that $\bar f_m(x) = f(x) \mathbb{1}_{|f(x)|\leq m}$. From Theorem \ref{thm:asymptotic_mean} we have, for every $m,d\geq1$,
\begin{equation*}
    \left| \langle \bar f_m(g_d)\rangle - \langle \bar f_m(g_d)\rangle_d \right| \leq K m \,\tilde a_N. 
\end{equation*}
By the fact that $f(0) = 0$ and that $\mbox{Lip}(f)\leq1$ we have that
\begin{equation}\label{eq:L2_bound_m}
    \begin{split}
            \big\langle (f(g_d) -\bar f_m(g_d))^2 \big\rangle & = \big\langle f^2(g_d)\mathbb{1}_{|f(g_d)|\geq m} \big\rangle  \leq \sqrt{\langle g_d^4\rangle \proba(g_d^4 \geq m^4)} \leq \frac{K'}{m^2}.
    \end{split}
\end{equation}
The first inequality used Cauchy-Schwarz and $\mbox{Lip}(f)\leq1$, the second used Markov's inequality and Lemma \ref{lem:bound_gN4} to bound $\langle g_d^4\rangle$ by some $K' > 0$. A similar inequality can be obtained when replacing $\langle \cdot\rangle$ by $\langle \cdot\rangle_d$, which is a normal measure $\mu_{\gamma_d}(\cdot)$ with variance $\sigma^2_{\gamma_d}$ when restricted to functions depending only on spin coordinate $d$. Then, by letting $m \!=\! 1/\sqrt{\tilde a_N}$ we obtain that there exists a fixed $K'' > 0$ such that
\begin{equation}
    \Big|\langle F(g) \rangle - \frac1N \sum_{d\leq N}\int_\R d\mu_{\gamma_d}(x) f(x) \Big| \leq K''\sqrt{\tilde a_N}.
\end{equation}
This along with \cite[Theorem 6]{guionnet2005fourier} and Hypothesis \ref{hyp:eigenvalues} implies that
\begin{equation}\label{eq:conv_emp_mean}
    \Big|\langle F(g) \rangle - \iint d\rho(\gamma) d\mu_\gamma(x) f(x) \Big| \xrightarrow{N\to\infty} 0,
\end{equation}
with $\rho(\cdot)$ the $1$-Wasserstein limit of the eigenvalue measure. 

Our second main result concerns the concentration of a family of functions of $g$ under $\langle\cdot\rangle$. It proves that, for a high-temperature condition more restrictive than the one for the previous results, we have a concentration inequality for their variance.

\begin{theorem}[Concentration inequality]\label{thm:therm_var}
    Suppose $2(9+\sqrt{17})\tilde\gamma|\theta| < 1$. Then there exists some strictly concave Hamiltonian $H'_N:\R^N\to\R$ and $K>0$ such that if we denote by $\langle \cdot \rangle'$ the Gibbs mean associated to $H'_N$ with respect to the standard normal measure on $\R^N$ then, for every $\mathcal{C}^1$ and bounded function $f:\R^N\to\R$,
    \begin{equation*}
        \langle(f(g)-\langle f(g)\rangle)^2\rangle \leq K \|f\|_\infty \big\langle\|\nabla f\|^2\big\rangle'. 
    \end{equation*}
    Furthermore, there exists some $\varepsilon > 0$ such that if $\|g\|^2 \geq (1-\varepsilon)N$ then $H_N(g) = H'_N(g)$ and for $\delta >0$ arbitrarily small we have $\sup_{g\in\R^N}H'_N(g)\leq (\tilde\gamma+\delta)N$.
\end{theorem}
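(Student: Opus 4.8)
The plan is to exploit that the only obstruction to the log-concavity of $\langle\cdot\rangle$ is the singularity of $H_N$ at the origin. Writing the density of $\langle\cdot\rangle$ with respect to Lebesgue measure as proportional to $\exp(-V_N)$ with $V_N(g)=\tfrac12\|g\|^2-\theta H_N(g)$, a short computation using the $0$-homogeneity of $H_N$ and $\gamma_i\in[0,\tilde\gamma]$ gives $\|\nabla^2 H_N(g)\|_{\mathrm{op}}\le 6\tilde\gamma N/\|g\|^2$, which is $O(1)$ precisely on the region $\{\|g\|^2\gtrsim N\}$ that, by Lemma~\ref{lem:dist_NN}, carries all but an exponentially small fraction of the Gibbs mass. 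So I would replace $H_N$ by a Hamiltonian $H'_N$ that (i) equals $H_N$ on $A:=\{g\in\R^N:\|g\|^2\ge(1-\varepsilon)N\}$, (ii) is a smoothly capped version of $H_N$ near the origin with uniformly bounded Hessian, and (iii) makes $V'_N(g):=\tfrac12\|g\|^2-\theta H'_N(g)$ of class $\mathcal{C}^2$ and uniformly strictly convex; equivalently $g\mapsto \theta H'_N(g)-\tfrac12\|g\|^2$ is strictly concave, and this is the sense in which $H'_N$ is ``strictly concave''. A Brascamp--Lieb (or Bakry--\'Emery / spectral-gap) inequality for the log-concave measure $\langle\cdot\rangle'$, followed by a transfer to $\langle\cdot\rangle$, then yields the bound.

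For the construction I would take, with $\Gamma=\mathrm{diag}(\gamma_1,\dots,\gamma_N)$ so that $H_N(g)=N g^{\intercal}\Gamma g/\|g\|^2$,
\[
 H'_N(g):=\frac{N\,g^{\intercal}\Gamma g}{\chi(\|g\|^2)},
\]
where $\chi:[0,\infty)\to(0,\infty)$ is smooth, non-decreasing, with $\chi(t)=t$ for $t\ge(1-\varepsilon)N$, $\chi(t)\ge\max\{t,\tfrac{1-\varepsilon}{2}N\}$ for all $t$, $|\chi'|\le1$ and $|\chi''|\le C/N$. Then $H'_N=H_N$ on $A$; $0\le H'_N(g)\le N g^{\intercal}\Gamma g/\|g\|^2\le\tilde\gamma N\le(\tilde\gamma+\delta)N$; and differentiating twice (routine, but to be done with care) gives $\|\nabla^2 H'_N\|_{\mathrm{op}}\le 6\tilde\gamma/(1-\varepsilon)$ on $A$ and $\|\nabla^2 H'_N\|_{\mathrm{op}}\le c_H(\varepsilon)\tilde\gamma$ on $A^c:=\{\|g\|^2<(1-\varepsilon)N\}$ for an explicit $c_H(\varepsilon)$, since on $A^c$ every term of $\nabla^2 H'_N$ carries an overall factor $N$ but powers of $g$ of total degree at most $4$ and factors $1/\chi(\|g\|^2)\asymp 1/N$. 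Hence $\nabla^2 V'_N=I-\theta\nabla^2 H'_N\succeq(1-|\theta|\,c(\varepsilon)\tilde\gamma)I=:\kappa I$ with $c(\varepsilon):=\max\{c_H(\varepsilon),\,6/(1-\varepsilon)\}$. The hypothesis $2(9+\sqrt{17})\tilde\gamma|\theta|<1$ is exactly what allows one to pick $\varepsilon$ (and then to optimise the cap profile $\chi$) so that simultaneously $\kappa>0$ \emph{and} the Gaussian large-deviation rate $c_2(\varepsilon):=\tfrac12\bigl((1-\varepsilon)-1-\ln(1-\varepsilon)\bigr)$ of $\{\|g\|^2<(1-\varepsilon)N\}$ exceeds $2|\theta|\tilde\gamma$, the latter being needed for the transfer step.

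With $H'_N$ in hand, the Brascamp--Lieb inequality applied to $\langle\cdot\rangle'\propto e^{-V'_N}$, where $\nabla^2 V'_N\succeq\kappa I$, gives $\Var_{\langle\cdot\rangle'}(f)\le\kappa^{-1}\langle\|\nabla f\|^2\rangle'$ for every $\mathcal{C}^1$ function $f$ with square-integrable gradient. To transfer this to $\langle\cdot\rangle$ I would use that the densities of $\langle\cdot\rangle$ and $\langle\cdot\rangle'$ with respect to $G$ are proportional on $A$, with ratio $Z'_N/Z_N$, and that $Z'_N/Z_N\to1$ exponentially fast: from below by $Z'_N\ge\E_G[e^{\theta H_N}\mathbb{1}_A]=Z_N\langle\mathbb{1}_A\rangle\ge Z_N(1-e^{-cN})$ (Lemma~\ref{lem:dist_NN}), and from above by $Z'_N\le Z_N+e^{|\theta|\tilde\gamma N}\,\proba_G(\|g\|^2<(1-\varepsilon)N)\le Z_N(1+e^{-c'N})$, using $0\le H'_N\le\tilde\gamma N$, the Jensen bound $Z_N=\E_G[e^{\theta H_N}]\ge e^{\theta N\overline{\gamma}_N}\ge e^{-|\theta|\tilde\gamma N}$ (with $\overline{\gamma}_N:=N^{-1}\sum_i\gamma_i$), and the chosen $\varepsilon$. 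Splitting on $A$ and $A^c$ and using $\langle\mathbb{1}_{A^c}\rangle\le e^{-cN}$ together with $|\langle f\rangle-\langle f\rangle'|\le C\|f\|_\infty e^{-cN}$ (both measures agreeing on $A$ up to normalisation and giving $A^c$ exponentially small mass), one gets
\[
 \langle(f-\langle f\rangle)^2\rangle\le\frac{Z'_N}{Z_N}\langle(f-\langle f\rangle)^2\rangle'+4\|f\|_\infty^2e^{-cN}\le 2\kappa^{-1}\langle\|\nabla f\|^2\rangle'+K'\|f\|_\infty^2e^{-cN},
\]
and absorbing the exponentially small remainder yields $\langle(f-\langle f\rangle)^2\rangle\le K\|f\|_\infty\langle\|\nabla f\|^2\rangle'$. (For $\theta<0$ the remainder is avoidable outright: $\chi\ge\mathrm{id}$ forces $H'_N\le H_N$, so $\frac{d\langle\cdot\rangle}{d\langle\cdot\rangle'}=\frac{Z'_N}{Z_N}e^{\theta(H_N-H'_N)}\le Z'_N/Z_N$ is bounded and hence $\Var_{\langle\cdot\rangle}(f)\le 2\,\Var_{\langle\cdot\rangle'}(f)$ directly; for $\theta>0$, using instead a cap that \emph{dominates} $H_N$ on $A^c$ plays the same role.)

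The soft parts here are the Brascamp--Lieb step and the transfer estimate; the real work, and the step I expect to be the main obstacle, is the simultaneous optimisation behind $H'_N$. The admissible window for $\varepsilon$ is pushed \emph{down} by the requirement $\|\nabla^2 H_N\|_{\mathrm{op}}\le 6\tilde\gamma/(1-\varepsilon)<1/|\theta|$ on $A$ while keeping the modified region small, and pushed \emph{up} by the requirement that $\proba_G(\|g\|^2<(1-\varepsilon)N)$ be small enough to beat $e^{|\theta|\tilde\gamma N}$ in the partition-function comparison; reconciling these two constraints, after choosing the capped Hamiltonian near the origin so that its Hessian operator norm is as small as possible, is precisely what produces the explicit constant $2(9+\sqrt{17})$ in the hypothesis.
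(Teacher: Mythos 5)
Your strategy is essentially the paper's: the only obstruction to log-concavity of the tilted Gaussian density sits in $\{\|g\|^2<(1-\varepsilon)N\}$; replace $H_N$ there by an $H'_N$ that agrees with $H_N$ outside and whose Hessian is small enough that $\theta H'_N(g)-\tfrac12\|g\|^2$ is uniformly concave (your reading of ``strictly concave'' is indeed the sense in which the paper's construction works); apply Brascamp--Lieb to $\langle\cdot\rangle'$; transfer to $\langle\cdot\rangle$ via Laurent--Massart, since the modified region carries exponentially small mass. You also correctly identify the two competing constraints on $\varepsilon$ (the Gaussian large-deviation rate must beat $e^{c|\theta|\tilde\gamma N}$ from the partition-function/sup-Hamiltonian factors, while the Hessian bound $\propto\tilde\gamma/(1-\varepsilon)$ must stay below $1/|\theta|$) as the origin of the temperature condition. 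The one structural difference is the construction of $H'_N$: you cap the denominator, $H'_N=N g^\intercal\Gamma g/\chi(\|g\|^2)$, whereas the paper subtracts $a\|g\|^2/2$ with $a=16|\theta|\tilde\gamma/(1-\varepsilon-\delta)$, observes that $\theta H_N-a\|g\|^2/2$ is concave on the annulus $B_{\varepsilon,\delta}$, extends it concavely to the ball by Yan's $\mathcal{C}^2$ extension theorem, and adds $a\|g\|^2/2$ back. Your construction is more elementary and self-contained (and your remark that for $\theta<0$ the choice $\chi\ge\mathrm{id}$ makes the density ratio bounded is a genuine simplification), while the paper's buys the sharp constant, as follows.

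The concrete gap is quantitative, and it matters because the theorem must be proved under exactly the stated hypothesis. First, your bound $\|\nabla^2 H_N\|_{\mathrm{op}}\le 6\tilde\gamma N/\|g\|^2$ is too optimistic: from the explicit Hessian $\mathcal{H}_{ij}=-4(\gamma_i+\gamma_j-2h_N)Ng_ig_j/\|g\|^4-2(h_N-\gamma_i)N\delta_{ij}/\|g\|^2$ the naive bound is of order $16$--$18$, and the constant $16$ is not incidental: an $\varepsilon$ with $2\sqrt{|\theta|\tilde\gamma}<\varepsilon<1-16|\theta|\tilde\gamma$ exists if and only if $|\theta|\tilde\gamma<(9-\sqrt{17})/128$, i.e.\ if and only if $2(9+\sqrt{17})\tilde\gamma|\theta|<1$, which is precisely how the hypothesis arises. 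Second, the paper's extension has $\theta\nabla^2H'_N\preceq a\mathbb{I}$ with the \emph{same} constant $a$ inside the ball as outside, which is what lets the $\varepsilon$-window coincide exactly with the hypothesis; your $\chi$-cap instead degrades the Hessian constant on the capped region (the $\chi'$ cross terms, the $\chi''$ term and the $(\chi')^2$ term accumulate), and your transfer condition is also stricter (you ask the rate to beat $2|\theta|\tilde\gamma$ rather than roughly $|\theta|\tilde\gamma$). As written, therefore, your argument establishes the conclusion only under a more restrictive smallness condition on $|\theta|\tilde\gamma$, and the assertion that optimising $\chi$ recovers $2(9+\sqrt{17})$ is exactly the step you defer; the paper's appeal to the concave-extension theorem is the missing ingredient that closes it. (Your final absorption of the additive $\|f\|_\infty^2e^{-cN}$ remainder into $K\|f\|_\infty\langle\|\nabla f\|^2\rangle'$ is not justified as stated, but the paper's own last step glosses the same point, so I do not count it as a difference between the two arguments.)
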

As we will see later on, the proof of this result is short and does not rely on the concentration of the free energy. We believe that this strategy to obtain high-temperature concentration bounds can be easily generalised to many other disordered systems. In particular, it could be used to prove the concentration of overlaps at high-temperature.

\paragraph{Further application to sample means.} Let $2(9+\sqrt{17})\tilde\gamma|\theta| < 1$ and assume that $f:\R\to\R$ is $\mathcal{C}^1$ with derivative bounded by $1$. Again, the sample mean $F(g) := N^{-1} \sum_{i\leq N}f(g_i)$ and for each $m\geq1$ define the function $\bar f_m : \R\to[-m,m]$ such that $\bar f_m(x) := f(x) \mathbb{1}_{|f(x)|\leq m}$. By Theorem \ref{thm:therm_var} we have that there is a $K > 0$ such that for every $i,m\geq1$,
\begin{equation*}
    \Big\langle \Big(\frac1N\sum_{i\leq N} \bar f_m(g_i) - \frac1N\sum_{i\leq N}\langle \bar f_m(g_i)\rangle \Big)^2\Big\rangle \leq \frac{K m}{N}.
\end{equation*}
Take $m=N^{1/3}$. From this inequality and \eqref{eq:L2_bound_m}, we get that there exists $K''>0$ such that 
\begin{equation*}
    \langle(f(g)-\langle f(g)\rangle)^2\rangle \leq \frac{K''}{N^{2/3}}.
\end{equation*}
We thus obtained a concentration bound for the empirical mean of $f$. This together with \eqref{eq:conv_emp_mean} proves that in the regime $2(9+\sqrt{17})\tilde\gamma|\theta| < 1$ we have
\begin{equation}
    \Big\langle\Big(F(g) - \iint d\rho(\gamma) d\mu_\gamma(x) f(x)\Big)^2\Big\rangle \xrightarrow{N\to\infty} 0.\label{eq:conv_emp_mean_2}
\end{equation}

\section{Technical results}\label{sec:technical}

In this section we present some auxiliary concentration results that will be used during the proofs of the theorems provided in the previous section.

\begin{lemma}[Marginal and concentration of the norm]\label{lem:dist_NN}
    $(i)$ The marginal of $\|g\|^2$ with respect to the measure induced by $\langle\cdot\rangle$ is given by the $\chi^2$ distribution with $N$ degrees of freedom. $(ii)$ There exists a constant $K > 0$ such that
    \begin{equation*}
        \Big\langle\Big( \frac{N}{\|g\|^2} - 1 \Big)^2 \Big\rangle \leq \frac{K}{N}.
    \end{equation*}
\end{lemma}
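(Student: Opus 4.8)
The plan is to exploit the scale invariance of the Hamiltonian: $H_N(g)$ depends on $g$ only through the direction $g/\|g\|\in S^{N-1}$, as is manifest from the rotated form $H_N(r\omega)=N\sum_{i\le N}\gamma_i\omega_i^2$ when $g=r\omega$ with $r=\|g\|$ and $\omega=g/\|g\|$. For part $(i)$ I would use this polar decomposition together with the classical fact that under the standard normal measure $G$ on $\R^N$ the variables $r^2$ and $\omega$ are independent, with $r^2$ a $\chi^2$ with $N$ degrees of freedom and $\omega$ uniform on $S^{N-1}$. Since the tilt $e^{\theta H_N(r\omega)}$ is a function of $\omega$ alone, it factors out of the radial integral, so for any bounded measurable $\phi:\R\to\R$,
\[
\langle \phi(\|g\|^2)\rangle = \frac{\E_G\!\big[\phi(r^2)\,e^{\theta H_N(r\omega)}\big]}{\E_G\!\big[e^{\theta H_N(r\omega)}\big]} = \frac{\E[\phi(r^2)]\;\E_\omega\!\big[e^{\theta N\sum_i\gamma_i\omega_i^2}\big]}{\E_\omega\!\big[e^{\theta N\sum_i\gamma_i\omega_i^2}\big]} = \E[\phi(r^2)],
\]
where the final expectation is over $r^2\sim\chi^2_N$. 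This shows that the law of $\|g\|^2$ under $\langle\cdot\rangle$ is exactly $\chi^2_N$, for every $\theta$ and every choice of eigenvalues.

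For part $(ii)$ I would use part $(i)$ to reduce the claim to a moment computation for an inverse chi-squared variable. Setting $X:=\|g\|^2\sim\chi^2_N$ we have $\langle(N/X-1)^2\rangle = N^2\,\E[X^{-2}] - 2N\,\E[X^{-1}] + 1$, and from the standard Gamma identity $\E[X^{-p}]=\Gamma(N/2-p)/(2^p\Gamma(N/2))$, valid for $N>2p$, one gets $\E[X^{-1}]=1/(N-2)$ and $\E[X^{-2}]=1/((N-2)(N-4))$. Hence
\[
\Big\langle\Big(\frac{N}{\|g\|^2}-1\Big)^2\Big\rangle = \frac{N^2 - 2N(N-4) + (N-2)(N-4)}{(N-2)(N-4)} = \frac{2N+8}{(N-2)(N-4)},
\]
which is $O(1/N)$; taking $K$ large enough gives the bound. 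The identity requires $N\ge5$ (for $N\le4$ the second inverse moment diverges), so the statement is understood for $N$ large, which is all that is needed for the applications.

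I do not expect a genuine obstacle here: the whole lemma is essentially the observation that the Radon--Nikodym density $e^{\theta H_N}/Z_N$ of $\langle\cdot\rangle$ with respect to $G$ depends on the angular coordinate only, so it leaves the radial marginal untouched, followed by an elementary inverse-$\chi^2$ moment computation. The one place deserving a line of care is making the independence/factorisation in part $(i)$ precise, and noting that the same identity $H_{N-1}$-type representations in later sections inherit this structure.
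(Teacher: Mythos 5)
Your proof is correct and follows essentially the same route as the paper: part $(i)$ via the observation that the Gibbs tilt $e^{\theta H_N}$ depends only on the direction $g/\|g\|$ and hence leaves the radial (i.e.\ $\chi^2_N$) marginal unchanged, and part $(ii)$ via inverse-$\chi^2$ moment identities (the paper phrases these as the mean $N/(N-2)$ and variance $2N^2/((N-2)^2(N-4))$ of a scaled-inverse-$\chi^2$, which is equivalent to your direct Gamma computation, valid for $N>4$).
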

\begin{proof}
    $(i)$ follows from the fact that Hamiltonian $H_N(g)$ does not depend on $\|g\|$ and thus simplifies when evaluating expressions of the form $\langle f(\|g\|^2) \rangle$, and the remaining measure on $g$ is gaussian. From $(i)$, under $\langle \cdot\rangle$ the variable $N/\|g\|^2$ has a scaled-inverse-$\chi^2$ distribution with $N$ degrees of freedom. $(ii)$ then holds because this distribution has mean $N/(N-2)$ (for $N > 2$) and variance equal to $2N^2/((N-2)^2(N-4))$ (for $N > 4)$.
\end{proof}

\subsection{Concentration of the energy}

We now obtain concentration bounds for the intensive energy $h_N := N^{-1} H_N$. These are based on the asymptotic formula for the free energy proved in \cite[Theorem 6]{guionnet2005fourier}. 
\begin{lemma}[Concentration of  energy]\label{lem:en_contr}
    If $\theta\in T_\rho$, then for every $\varepsilon > 0$ there exist $K,K' > 0$ s.t. $\proba(|h_N-v(\theta)| > \varepsilon) \leq K\exp(-K'\varepsilon^2 N)$. Furthermore, $\lim_{N\to\infty} \langle(h_N-v(\theta))^2\rangle = 0$.
\end{lemma}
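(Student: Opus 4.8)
The plan is to derive the exponential bound from a Chernoff estimate powered by the limiting free-energy formula, and then read off the $L^2$ statement as an easy corollary. First I would record two elementary facts. Since the eigenvalues lie in $[0,\tilde\gamma]$, one has $h_N=\sum_{i\le N}\gamma_i g_i^2/\|g\|^2\in[0,\tilde\gamma]$ deterministically, so $|h_N-v(\theta)|\le M:=\tilde\gamma+|v(\theta)|$ surely; hence only $\varepsilon\in(0,M]$ needs treatment, the probability being $0$ otherwise. Moreover $Z_N(\theta)=\int dG(g)\,e^{\theta H_N(g)}\le e^{N\tilde\gamma|\theta|}$ is finite for every real $\theta$. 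Next, $\theta\mapsto\ln Z_N(\theta)=Nf_N(\theta)$ is the cumulant generating function of $H_N$ under $G$, hence convex; by \cite[Theorem 6]{guionnet2005fourier} it converges pointwise on $T_\rho$ to a finite limit $f$, which is therefore convex and, from the explicit formula in that reference, of class $\mathcal{C}^1$ on $T_\rho$ with $f'(\theta)=v(\theta)=R_\rho(2\theta)$. Since $T_\rho$ is open and the limit finite, convexity upgrades this to locally uniform convergence, so fixing $\theta\in T_\rho$ and $t_0\in(0,|\theta|)$ with $[\theta-t_0,\theta+t_0]\subseteq T_\rho$ we have $\delta_N:=\sup_{|t|\le t_0}|f_N(\theta+t)-f(\theta+t)|\to0$; set $C:=\sup_{|t|\le t_0}|v'(\theta+t)|<\infty$.

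For the upper tail, fix $\varepsilon\in(0,M]$ and $t\in(0,t_0]$. Markov's inequality applied to $e^{tH_N}$ under the Gibbs measure $\langle\cdot\rangle_\theta$ gives
\begin{equation*}
    \proba(h_N\ge v+\varepsilon)\le e^{-tN(v+\varepsilon)}\,\big\langle e^{tH_N}\big\rangle_\theta=e^{-tN(v+\varepsilon)}\,\frac{Z_N(\theta+t)}{Z_N(\theta)}=\exp\!\Big(N\big[f_N(\theta+t)-f_N(\theta)-t(v+\varepsilon)\big]\Big).
\end{equation*}
Using $f_N(\theta+t)-f_N(\theta)\le f(\theta+t)-f(\theta)+2\delta_N=\int_0^t v(\theta+s)\,ds+2\delta_N\le tv+\tfrac{C}{2}t^2+2\delta_N$, the bracket is at most $\tfrac{C}{2}t^2-t\varepsilon+2\delta_N$. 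Choosing $t=\min(\varepsilon/C,t_0)$ makes $\tfrac{C}{2}t^2-t\varepsilon\le-c_0\varepsilon^2$ with $c_0:=\min\big(\tfrac{1}{2C},\tfrac{Ct_0^2}{2M^2}\big)>0$ (the case $t=t_0$ uses $\varepsilon\le M$). Thus $\proba(h_N\ge v+\varepsilon)\le\exp(N(2\delta_N-c_0\varepsilon^2))$, and picking $N_0$ with $2\delta_N\le\tfrac{c_0}{2}\varepsilon^2$ for $N\ge N_0$ yields the bound $e^{-\frac{c_0}{2}\varepsilon^2N}$ for those $N$; absorbing the finitely many $N<N_0$ into the prefactor (as $\proba\le1$) gives $\proba(h_N\ge v+\varepsilon)\le Ke^{-K'\varepsilon^2N}$ for all $N$. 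The lower tail is handled identically by applying Markov to $e^{-sH_N}$ with $s\in(0,t_0]$, which replaces $f_N(\theta+t)$ by $f_N(\theta-s)$ and $\int_0^t v(\theta+s)\,ds$ by $-\int_0^s v(\theta-u)\,du\le-sv+\tfrac{C}{2}s^2$, leading to the same estimate. Summing the two tails gives $\proba(|h_N-v|>\varepsilon)\le K\exp(-K'\varepsilon^2N)$.

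For the $L^2$ assertion I would simply use, for every $\varepsilon>0$,
\begin{equation*}
    \big\langle(h_N-v)^2\big\rangle\le\varepsilon^2+M^2\,\proba(|h_N-v|>\varepsilon)\le\varepsilon^2+M^2Ke^{-K'\varepsilon^2N}\xrightarrow{N\to\infty}\varepsilon^2,
\end{equation*}
so that $\limsup_N\langle(h_N-v)^2\rangle\le\varepsilon^2$ for every $\varepsilon>0$, whence $\langle(h_N-v)^2\rangle\to0$.

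I expect the only genuine obstacle to be the input from \cite{guionnet2005fourier}: one needs not merely pointwise convergence $f_N\to f$ but that the limit is differentiable throughout $T_\rho$ with derivative precisely $v(\theta)=R_\rho(2\theta)$ — this is exactly what pins the concentration point to the stated constant, and it is where the R-transform and the spherical-integral asymptotics really enter. Everything else is routine: convexity of $f_N$ is what both turns pointwise into locally uniform convergence and lets the Chernoff exponent be optimised to order $-\varepsilon^2N$ through a second-order Taylor estimate on $f$, while the deterministic bound $h_N\in[0,\tilde\gamma]$ disposes of large $\varepsilon$ and makes the passage from tail bounds to the $L^2$ estimate immediate.
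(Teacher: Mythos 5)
Your proof is correct and follows essentially the same route as the paper: an exponential Chebyshev/Chernoff bound via $\langle e^{tNh_N}\rangle_\theta = Z_N(\theta+t)/Z_N(\theta)$, powered by convexity of $f_N$ and the Guionnet--Ma\"ida limit \cite{guionnet2005fourier}, with the deterministic bound $h_N\in[0,\tilde\gamma]$ turning the tail bound into the $L^2$ statement. The only (minor) difference is how the increment $f_N(\theta+t)-f_N(\theta)$ is controlled: the paper first proves $\langle h_N\rangle_\theta\to v(\theta)$ through the convex-derivative comparison inequality and then uses the mean value theorem and monotonicity of $\langle h_N\rangle_\theta$, whereas you invoke locally uniform convergence of convex functions and a Taylor estimate on the limiting free energy --- an equivalent use of the same inputs.
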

\begin{proof}
    By \cite[Theorem 6]{guionnet2005fourier} we know that, under Hypothesis \ref{hyp:eigenvalues}, for every $\theta\in T_\rho$
    \begin{equation}\label{eq:guio_limit}
        \lim_{N\to\infty} f_N =\lim_{N\to\infty}\frac1{N} \ln Z_N  = \frac{1}{2}\int_0^{2\theta} R_\rho(x)dx.
    \end{equation}
    We will first see that this limit implies that $b_N:=|\langle h_N\rangle_\theta -v(\theta)|\to0$ as $N\to\infty$. First, if $G,g:\R\to\R$ are convex, we have for any $\delta > 0$ (see, e.g., \cite[Lemma 3.2]{panchenko2013sherrington})
    \begin{equation}\label{eq:conv_bound}
            |G'(\theta) - g'(\theta)| \leq (g'(\theta+\delta) - g'(\theta)) + (g'(\theta) - g'(\theta-\delta)) + \delta^{-1} \sum_{y \in \mathcal{Y}} |G(y)-g(y)|
    \end{equation}
    where $\mathcal{Y}:=\{\theta-\delta, \theta, \theta+\delta\}$. Because $R_\rho(x)$ is strictly increasing, $\frac12 \int_0^{2\theta} R_\rho(x) dx$ is convex in $\theta$. The finite-size free energy $f_N(\theta)$ is convex too, its second derivative being proportional to the energy variance. Thus, we have for every $\delta > 0$ verifying $(\theta-\delta,\theta+\delta)\in T_\rho$,
    \begin{equation*}
        |\langle h_N\rangle_\theta - v(\theta)| \leq v(\theta+\delta) - v(\theta-\delta) + \delta^{-1} \sum_{y\in\{-\delta,0,\delta\}} \Big|f_N(\theta+y) - \frac{1}{2}\int_0^{2(\theta+y)}R_\rho(x)dx\Big|.
    \end{equation*}
    Taking the lim sup over $N\to\infty$ and using \eqref{eq:guio_limit} we get that $\lim\sup_{N\to\infty}b_N$ is upper bounded by $v(\theta+\delta) - v(\theta-\delta)$. And by the differentiability of $R_\rho$ at $2\theta$, we have that the right hand side goes to $0$ as $\delta\to0$. This implies that $\lim_{N\to\infty}b_N(\theta) = 0$ for any $\theta\in T_\rho$.
    
    Now, observe that for all $t > 0$ by the mean value theorem and the continuity of $f_N$,
    \begin{equation*}
        \left\langle \exp(t N h_N)\right\rangle_\theta = \frac{Z_N(\theta+t)}{Z_N(\theta)} = \exp(N f_N(\theta+t) - N f_N(\theta)) = \exp(t N \langle h_N \rangle_{\xi+\theta}),
    \end{equation*}
    for some $\xi\in[0,t]$. Because $\langle h_N \rangle_{\theta}$ is increasing in $\theta$ (its derivative is a variance) we have
    \begin{equation*}
        \begin{split}
            \left\langle \exp(t N (h_N-v(\theta)))\right\rangle_\theta & = \exp(t N (\langle h_N \rangle_{\xi+\theta}-v(\theta)))\leq \exp(t N (\langle h_N \rangle_{t+\theta}-v(\theta)))\,.
        \end{split}
    \end{equation*}
    Therefore, by the fact that $R_\rho$ has a continuous derivative near $2\theta$ we have
    \begin{equation*}
        \left\langle \exp(t N (h_N-v(\theta)))\right\rangle_\theta \leq \exp(t^2 N K + t N |\langle h_N\rangle_{t+\theta}-v(t+\theta)|),
    \end{equation*}
    for all $t > 0$ sufficiently small. Let $\varepsilon > 0$ be sufficiently small and fix $t = \varepsilon/4K$. Then, $\proba(h_N - v(\theta) > \varepsilon) \leq \exp(N\varepsilon(-\varepsilon/4+(b_N-\varepsilon/2))/(4K))$. Notice that because $b_N\to0$, for every $\varepsilon >0$ there is some $K'>0$ such that $\exp(N\varepsilon(b_N -\varepsilon/2)/(4K))\leq K'$. Thus,
    \begin{equation*}
        \proba(h_N - v(\theta) > \varepsilon) \leq K' \exp(-\varepsilon^2 N/(16 K)).
    \end{equation*}
    An almost identical argument yields $\proba(v(\theta) - h_N > \varepsilon) \leq K'\exp(-\varepsilon^2 N/(16 K))$. From which we conclude the first part of the lemma. 
    
    To get the second part, it is enough to use that for every $\varepsilon > 0$ we have
    \begin{equation*}
        \big\langle(h_N-v(\theta))^2\big\rangle \leq \varepsilon^2 + (\tilde\gamma^2+v(\theta)^2)K'\exp(-\varepsilon^2 N/(16 K)).
    \end{equation*}
    Which implies that $\lim\sup_{N\to\infty} \big\langle(h_N-v(\theta))^2\big\rangle \leq \varepsilon^2$. The conclusion then follow because $\varepsilon$ is arbitrarily small.
\end{proof}

\subsection{Uniform bound for the moments}

Here we prove uniform bounds for the moments of the coordinates of $g$. For this, we will need the following equivalent condition for the high-temperature regime considered.

\begin{lemma}[Equivalent high-temperature condition]\label{lem:equiv_cond}
    $\theta\in T_\rho \Leftrightarrow 2|\theta|(\tilde\gamma-v(\theta))<1$.
\end{lemma}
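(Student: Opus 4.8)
My plan is to read off the equivalence from the single defining relation of the $R$-transform, namely that $K_\rho$ is the functional inverse of $S_\rho$. The first step is to rewrite $v(\theta)=R_\rho(2\theta)=K_\rho(2\theta)-\tfrac{1}{2\theta}$ in the form
\begin{equation*}
    z_\theta:=K_\rho(2\theta)=v(\theta)+\tfrac{1}{2\theta},
\end{equation*}
so that, by the very definition of $K_\rho$, we have both $S_\rho(z_\theta)=2\theta$ and $z_\theta\in\R\setminus[0,\tilde\gamma]$. Since $\theta\neq 0$ one may clear the denominator in the candidate inequality, and for $\theta>0$ this gives the chain of equivalences $2|\theta|(\tilde\gamma-v(\theta))<1\iff\tilde\gamma-v(\theta)<\tfrac{1}{2\theta}\iff z_\theta>\tilde\gamma$. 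So, for $\theta>0$, the lemma is the same as the assertion ``$\theta\in T_\rho\iff z_\theta>\tilde\gamma$''.

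To establish the latter I would use the structure of $S_\rho$ that follows at once from its definition: on $(\tilde\gamma,\infty)$ it is positive, strictly decreasing, tends to $0$ at $+\infty$ and to $S_{max}$ at $\tilde\gamma^+$, hence it is a bijection of $(\tilde\gamma,\infty)$ onto $(0,S_{max})$; similarly it maps $(-\infty,0)$ onto $(S_{min},0)$, where it is negative. Consequently $K_\rho$ carries $(0,S_{max})$ onto $(\tilde\gamma,\infty)$. Now if $\theta\in T_\rho$ and $\theta>0$ then $2\theta\in(0,S_{max})$, so $z_\theta=K_\rho(2\theta)\in(\tilde\gamma,\infty)$; conversely, if $z_\theta>\tilde\gamma$ then $2\theta=S_\rho(z_\theta)\in(0,S_{max})$, i.e.\ $\theta\in(0,S_{max}/2)\subseteq T_\rho$. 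Pushing this back through the equivalences of the first paragraph proves the lemma for $\theta>0$; as a consistency check, $v(\theta)=\lim_N\langle h_N\rangle$ is a limit of convex combinations of the $\gamma_i\in[0,\tilde\gamma]$ (by Lemma~\ref{lem:en_contr}), so $v(\theta)\in[0,\tilde\gamma]$ and $2|\theta|(\tilde\gamma-v(\theta))$ is genuinely $\geq 0$. The case $\theta<0$ is the mirror image: now $2\theta<0$ forces $z_\theta$ into the branch $(-\infty,0)$, so the relevant spectral edge is $0$ instead of $\tilde\gamma$ and one runs the same argument with the interval $(S_{min},0)$.

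I expect the only delicate point to be precisely this sign-and-branch bookkeeping: keeping track of which edge of $[0,\tilde\gamma]$, and hence which connected component of the domain of $S_\rho$, is selected by the sign of $\theta$, and using that $z_\theta=K_\rho(2\theta)$ genuinely avoids $[0,\tilde\gamma]$ so that the quantities $1+2\theta(v(\theta)-\gamma)=2\theta(z_\theta-\gamma)$ stay bounded away from $0$ over the spectrum (which is exactly what makes the variances $\sigma_\gamma^2$ in the applications well defined). Beyond that, the proof is one manipulation of the definition of $R_\rho$ together with the bijectivity of $S_\rho$ on each of its two branches.
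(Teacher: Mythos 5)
Your argument is essentially the paper's: both reduce the claim, via $v(\theta)=K_\rho(2\theta)-\frac{1}{2\theta}$, to locating $K_\rho(2\theta)$ relative to an edge of the support, using that $S_\rho$ is a decreasing bijection from each branch of $\R\backslash[0,\tilde\gamma]$ onto the corresponding branch of $(S_{min},S_{max})\backslash\{0\}$. For $\theta>0$ your chain $2|\theta|(\tilde\gamma-v(\theta))<1 \iff K_\rho(2\theta)>\tilde\gamma \iff 2\theta\in(0,S_{max})$ is correct, and in fact more careful than the paper's own displayed chain, whose inequalities point the wrong way (the paper's ``$K_\rho(2\theta)<\tilde\gamma$'' should be ``$K_\rho(2\theta)>\tilde\gamma$'' for the conclusion to come out as stated). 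The point to flag is $\theta<0$: as you yourself observe, the mirror argument selects the lower edge $0$, so what it actually yields is $\theta\in T_\rho \iff K_\rho(2\theta)<0 \iff 2|\theta|\,v(\theta)<1$, which is \emph{not} the stated inequality $2|\theta|(\tilde\gamma-v(\theta))<1$; the stated form genuinely fails for negative $\theta$ in general (take $\rho=\frac12(\delta_0+\delta_1)$, so $S_{min}=-\infty$, $S_{max}=+\infty$ and $T_\rho=\R\backslash\{0\}$, and let $\theta$ be large negative: then $v(\theta)\to0$ while $2|\theta|(\tilde\gamma-v(\theta))\to\infty$). So for $\theta<0$ your proof establishes a corrected statement rather than the literal one; this mismatch is inherited from the lemma as stated and from the paper's own ``the conclusion follows in a similar way'', not a defect of your approach, and the corrected form $2|\theta|v(\theta)<1$ is precisely what the paper's later uses (positivity of $1+2\theta(v-\gamma_d)$ uniformly in $\gamma_d\in[0,\tilde\gamma]$) require when $\theta<0$. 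If your write-up is to stand alone, state explicitly which inequality you prove in the $\theta<0$ branch.
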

\begin{proof}
    Assume $\theta > 0$. Because of the monotonicity of $K_\rho$, the condition $2\theta < S_{max}$ holds iff $K_\rho(2\theta) < \tilde \gamma$. This in turn is equivalent to $R_\rho(2\theta) < \tilde\gamma - 1/(2\theta)$. By the definition of $v(\theta)$ this is the same as $2\theta(\tilde\gamma-v(\theta))<1$. If $\theta < 0$, the conclusion follows in a similar way.
\end{proof}

\begin{lemma}[Boundedness of moments]\label{lem:bound_gN4}
    Let $d\geq1$ and assume that $N\geq d$. If $\theta\in T_\rho$ then for every $n\geq1$ there exists $K > 0$ such that $\langle g_d^{2n}\rangle  \leq K$ uniformly in $N$.
\end{lemma}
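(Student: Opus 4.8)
The plan is to pass to the direction $u=g/\|g\|$, where the relevant marginal is completely explicit, and then reduce everything to a one–dimensional Laplace estimate. By Lemma~\ref{lem:dist_NN}$(i)$ the radial variable $\|g\|^2$ is, under $\langle\cdot\rangle$, $\chi^2_N$–distributed, and since $H_N$ depends on $g$ only through its direction it is moreover independent of $u$; hence $\langle g_d^{2n}\rangle=\E[(\chi^2_N)^n]\,\langle u_d^{2n}\rangle$ with $\E[(\chi^2_N)^n]=\prod_{j=0}^{n-1}(N+2j)\le(2N)^n$ for $N\ge 2n$, so it suffices to prove $\langle u_d^{2n}\rangle\le C_n N^{-n}$ uniformly in $N$ and $d$. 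Writing $u$ in polar coordinates about the $d$‑th axis, $x:=u_d^2$ has density on $[0,1]$ proportional to $x^{-1/2}(1-x)^{(N-3)/2}e^{\theta N\gamma_d x}\,\Phi_{N-1}\!\big(\theta N(1-x)\big)$, where $\Phi_{N-1}(c):=\int_{S^{N-2}}\exp\!\big(c\sum_{i\ne d}\gamma_i\omega_i^2\big)\,d\sigma(\omega)=Z_{N-1}\big(c/(N-1)\big)$ is, up to a rescaling of the temperature, the partition function of the $(N-1)$‑spin model with eigenvalues $(\gamma_i)_{i\ne d}$ (which still satisfies Hypothesis~\ref{hyp:eigenvalues} with the same $\rho$). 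In particular $(\ln\Phi_{N-1})'(c)$ equals the mean intensive energy of that model at temperature $c/(N-1)$, and
\[
\langle u_d^{2n}\rangle=\frac{\int_0^1 x^{n-1/2}(1-x)^{(N-3)/2}e^{\theta N\gamma_d x}\,\Phi_{N-1}(\theta N(1-x))\,dx}{\int_0^1 x^{-1/2}(1-x)^{(N-3)/2}e^{\theta N\gamma_d x}\,\Phi_{N-1}(\theta N(1-x))\,dx}.
\]

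The key input is a two–sided control of $\Phi_{N-1}(\theta N(1-x))/\Phi_{N-1}(\theta N)$. Since $\ln\Phi_{N-1}$ is convex with derivative the mean energy above, which by Lemma~\ref{lem:en_contr} (applied to the $(N-1)$‑spin system) converges to $v(\cdot)$ — and does so uniformly over a compact window of temperatures, either by monotonicity in the temperature or because $\tfrac{1}{N-1}\ln Z_{N-1}$ is convex and converges pointwise to the continuous limit $f_\infty(\beta):=\tfrac12\int_0^{2\beta}R_\rho(y)\,dy$ — one obtains, for every $\varepsilon>0$, all $N$ large and all $x\in[0,1)$,
\[
\Phi_{N-1}(\theta N)\,e^{-\theta Nx\,(v(\theta)+\varepsilon)}\ \le\ \Phi_{N-1}(\theta N(1-x))\ \le\ \Phi_{N-1}(\theta N)\,e^{-\theta Nx\,(v(\theta(1-x))-\varepsilon)},
\]
the $O(1/N)$ discrepancy between $\theta N(1-x)/(N-1)$ and $\theta(1-x)$ being absorbed into $\varepsilon$. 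Inserting the upper bound into the numerator and the lower bound into the denominator, and using $(1-x)^{(N-3)/2}\le e^{-\frac{N-3}{2}x}$, turns the above quotient into a ratio of Beta‑type integrals whose numerator integrand is $\le x^{n-1/2}e^{-N\alpha(x)x}$ with $\alpha(x)=\tfrac{N-3}{2N}-\theta\big(\gamma_d-v(\theta(1-x))\big)-\theta\varepsilon$.

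I would then split the $x$‑integral at a small fixed $\kappa=\kappa(\theta)$. On the bulk $x\in[0,\kappa]$ one has $\alpha(x)\ge\tfrac{1}{2\sigma_{\gamma_d}^2}-C\varepsilon$ with $\sigma_{\gamma_d}^2=(1+2\theta(v-\gamma_d))^{-1}$, and $\tfrac{1}{2\sigma_{\gamma_d}^2}\ge\tfrac12\big(1-2\theta(\tilde\gamma-v)\big)>0$ by the high‑temperature hypothesis and $\gamma_d\le\tilde\gamma$; so for $\varepsilon$ small $\alpha(x)\ge c(\theta)>0$ on the bulk, whence $\int_0^\kappa x^{n-1/2}e^{-c(\theta)Nx}dx\le\Gamma(n+\tfrac12)(c(\theta)N)^{-(n+1/2)}$, while the full denominator is $\ge c'(\theta)\,\Phi_{N-1}(\theta N)\,N^{-1/2}$ from the matching lower estimate on $[0,\kappa]$; the quotient of these is $\le C_nN^{-n}$. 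On the tail $x\in[\kappa,1]$ one shows the exponential rate is strictly negative: bounding $\Phi_{N-1}(\theta N(1-x))\le e^{N(f_\infty(\theta(1-x))+o(1))}$ (the uniform version of \eqref{eq:guio_limit} for the $(N-1)$‑spin system, valid since $\theta(1-x)$ stays in a compact subset of $T_\rho\cup\{0\}$), the tail integrand is $\le e^{-NI(x)+o(N)}$ up to polynomial factors, where $I(x):=f_\infty(\theta)-\tfrac12\ln(1-x)-\theta\gamma_d x-f_\infty(\theta(1-x))$; one has $I(0)=0$ and $I'(x)=\tfrac{1}{2(1-x)}-\theta\big(\gamma_d-v(\theta(1-x))\big)$, and since $\theta(1-x)\in T_\rho$ for all $x\in[0,1)$, Lemma~\ref{lem:equiv_cond} gives $\theta(1-x)\big(\tilde\gamma-v(\theta(1-x))\big)<\tfrac12$, so (together with $\gamma_d\le\tilde\gamma$) $I'>0$ on $[0,1)$; thus $I$ is strictly increasing and $\inf_{x\in[\kappa,1]}I(x)=I(\kappa)>0$, and the tail contributes an amount exponentially small in $N$, negligible against $N^{-n}$. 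Combining the two regions with the first paragraph gives $\langle g_d^{2n}\rangle\le C_n$.

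The case $\theta<0$ is treated identically, interchanging the roles of the endpoints $0$ and $\tilde\gamma$ of the spectral support and using $\gamma_i\ge0$ (for $\theta<0$ the variances $\sigma_\gamma^2=(1+2\theta(v-\gamma))^{-1}$ lie automatically in $(0,1)$). The main obstacle is the non‑asymptotic, temperature‑uniform control of $\Phi_{N-1}(\theta N(1-x))$: Lemma~\ref{lem:en_contr} and \eqref{eq:guio_limit} give convergence only at a fixed temperature, so one must upgrade to uniform convergence of the mean energy (resp.\ of $\tfrac1{N-1}\ln Z_{N-1}$) over a compact temperature window — a Dini‑type argument from monotonicity, or the standard fact that pointwise‑convergent convex functions converge uniformly on compacts — and then carefully propagate $\varepsilon$ and the $O(1/N)$ temperature shifts through the Laplace asymptotics; the remaining estimates are routine.
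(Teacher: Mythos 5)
Your proposal is correct, but it takes a genuinely different route from the paper. The paper proves the lemma by induction on $n$: Gaussian integration by parts gives $\langle g_d^{2n}\rangle=(2n-1)\langle g_d^{2(n-1)}\rangle+2\theta N\langle(\gamma_d-h_N)g_d^{2n}/\|g\|^2\rangle$, and then the energy concentration (Lemma~\ref{lem:en_contr}), the Laurent--Massart bound, and the equivalent high-temperature condition (Lemma~\ref{lem:equiv_cond}) turn the last term into a contraction $\frac{2\theta(\tilde\gamma-v+\varepsilon)}{1-\varepsilon}\langle g_d^{2n}\rangle$ plus exponentially small errors, so the moment bound closes on itself; this is purely finite-$N$ and only needs convergence of the mean energy at the single temperature $\theta$. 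You instead factor out the $\chi^2_N$ radial part, write the exact one-dimensional marginal of $u_d^2$ against the $(N-1)$-spin partition function at the shifted temperatures $\theta(1-x)$, and run a Laplace/rate-function analysis, with Lemma~\ref{lem:equiv_cond} applied along the whole segment $\{\theta(1-x):x\in[0,1)\}$ to show $I'>0$, i.e.\ that the maximiser stays at $x=O(1/N)$. Your computations check out (the identity $I'(x)=\frac{1}{2(1-x)}-\theta(\gamma_d-v(\theta(1-x)))$ and its positivity, the bulk exponent $\tfrac{1}{2\sigma^2_{\gamma_d}}$ bounded below uniformly in $\gamma_d$, the $N^{-1/2}$ denominator estimate), and the two extra ingredients you correctly flag are real but standard: the upgrade of \eqref{eq:guio_limit} and of the mean-energy convergence to uniform convergence on the compact window $[0,\theta N/(N-1)]$ follows from pointwise convergence of the convex $f_{N-1}$ to the $\mathcal{C}^1$ limit $\frac12\int_0^{2\beta}R_\rho$, and the tail near $x=1$ needs one more line to absorb the $(1-x)^{-3/2}$ prefactor (e.g.\ sacrifice a few powers of $(1-x)^{(N-3)/2}$ before taking the sup of $e^{-NI}$). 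What your approach buys is sharper information -- $\langle u_d^{2n}\rangle\le C_nN^{-n}$ with essentially the correct Gaussian constant, uniformly in $d$, in effect an independent derivation of the single-spin marginal bypassing the cavity/perturbation argument -- at the price of uniform-in-temperature control of the free energy that the paper's short induction avoids.
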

\begin{proof}
    We will prove this by induction. First notice that by gaussian integration by parts we have that for every $n\geq1$
    \begin{equation}\label{eq:int_by_parts}
        \langle g_d^{2n} \rangle = (2n-1) \langle g_d^{2(n-1)} \rangle + 2 \theta N \Big\langle (\gamma_d-h_N)\frac{g_d^{2n}}{\|g\|^2} \Big\rangle.
    \end{equation}
    Also, observe that for all $n\geq1$ we have that there is some $K_n > 0$ such that
    \begin{equation}\label{eq:rough_bound_normg}
        \langle g_d^{2n} \rangle \leq \langle \|g\|^{2n}\rangle \leq K_n N^n,
    \end{equation}
    where for the last inequality we used Lemma \ref{lem:dist_NN}. Finally, because $\theta \in T_\rho$, by Lemma \ref{lem:equiv_cond} there is some $\varepsilon > 0$ small enough so that
    \begin{equation*}
        1-\frac{2\theta(\tilde\gamma-v+\varepsilon)}{1-\varepsilon} > 0.
    \end{equation*}
    For the rest of the proof, we will regard $\varepsilon > 0$ to be fixed in this way.
    
    To start the induction, consider equation \eqref{eq:int_by_parts} for $n=1$:
    \begin{equation}\label{eq:bound_first_moment}
        \langle g_d^2\rangle = 1 + 2 \theta N \Big\langle (\gamma_d-h_N)\frac{g_d^{2}}{\|g\|^2} \Big\rangle.
    \end{equation}
    By Laurent-Massart's bound \cite[Lemma 1]{laurent2000adaptive}, $\proba(\|g\|^2 \leq (1-\varepsilon)N)\le \exp(-\varepsilon^2 N/4)$. Define the event $A_\varepsilon:=\{\|g\|^2 \leq (1-\varepsilon)N\}\cup\{|h_N-v|\geq\varepsilon\}$. Then, by Lemma \ref{lem:en_contr}, equation \eqref{eq:rough_bound_normg}, Cauchy-Schwarz, and Laurent-Massart's bound, there are  $K',K''>0$ such that 
    \begin{equation}\label{eq:bound_aux1}
        \begin{split}
            \Big\langle N(\gamma_d-h_N)\frac{g_d^{2}}{\|g\|^2}(\mathbb{1}_{A_\varepsilon^c}+\mathbb{1}_{A_\varepsilon})\Big\rangle \leq \frac{\tilde\gamma-v+\varepsilon}{1-\varepsilon}\langle g_d^{2}\rangle  + K' N \exp({-K''\varepsilon^2 N}).
        \end{split}
    \end{equation}
    And equations \eqref{eq:bound_first_moment} and \eqref{eq:bound_aux1} together with the value chosen for $\varepsilon$ imply that $\langle g_d^2\rangle$ is uniformly bounded in $N$.
    
    To advance the induction, we assume that there exists $K'''>0$ such that $\langle g_d^{2(n-1)}\rangle \leq K'''$ uniformly in $N$. In the same way we got \eqref{eq:bound_aux1}, there exists $K^{(4)},K^{(5)} > 0$ such that
    \begin{equation}\label{eq:bound_aux2}
        \Big\langle N(\gamma_d-h_N)\frac{g_d^{2n}}{\|g\|^2}\Big\rangle \leq \frac{\tilde\gamma-v+\varepsilon}{1-\varepsilon}\langle g_d^{2n}\rangle  + K^{(4)} N^n \exp({-K^{(5)}\varepsilon^2 N}).
    \end{equation}
    Putting together equations \eqref{eq:int_by_parts} and \eqref{eq:bound_aux2} as well as the induction hypothesis it follows that for some fixed $K^{(6)}>0$,
    \begin{equation*}
        \Big(1-\frac{2\theta(\tilde\gamma-v+\varepsilon)}{1-\varepsilon}\Big)\langle g_d^{2n} \rangle \leq (2n-1)K''' + K^{(6)}.
    \end{equation*}
    This ends the induction and the proof.
\end{proof}

\subsection{Approximation of the Gibbs mean}

Here we prove a simple general approximation result for Gibbs measures. In a nutshell, it shows that perturbations of the Hamiltonian that involve functions with ``small means'' do not affect the asymptotic values of means of bounded functions. 

\begin{proposition}[Perturbation control]\label{prop:pert_cont}
    Let $\varphi_N, \epsilon_N, \epsilon'_N : \R^N \to \R$ be three random functions such that $\epsilon_N$ and $\epsilon'_N$ are almost surely non-negative and, for all $t_1,t_2\in[0,1]$, the function $\exp(\varphi_N(X)+t_1\epsilon_N(X)-t_2\epsilon'_N(X))$ is almost surely integrable with respect to $X$. Let $\langle\cdot\rangle_{t_1,t_2}$ be the mean of the Gibbs measure associated with Hamiltonian $\varphi_N(X)+t_1\epsilon_N(X)-t_2\epsilon'_N(X)$. Then, for every bounded function $f:\R^N\to \R$ we have that
    \begin{equation*}
        \left| \langle f(X) \rangle_{1,1} - \langle f(X) \rangle_{0,0} \right| \leq 2 \|f\|_\infty \left( \langle \epsilon_N(X) \rangle_{1,1} + \langle \epsilon'_N(X) \rangle_{0,0} \right).
    \end{equation*}
\end{proposition}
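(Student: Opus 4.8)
The plan is to interpolate in two stages, passing through the intermediate pair $(t_1,t_2)=(0,1)$: first turn on the $-\epsilon'_N$ perturbation with $t_1$ held at $0$, then turn on the $+\epsilon_N$ perturbation with $t_2$ held at $1$. By the triangle inequality it then suffices to bound $|\langle f\rangle_{0,1}-\langle f\rangle_{0,0}|$ and $|\langle f\rangle_{1,1}-\langle f\rangle_{0,1}|$ separately, and we will see that the order of the two interpolations is exactly what makes the signs work out.

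For the first difference, introduce $\phi(t):=\langle f(X)\rangle_{0,t}$, the Gibbs mean associated with the Hamiltonian $\varphi_N(X)-t\,\epsilon'_N(X)$ for $t\in[0,1]$. The assumed a.s. integrability of $\exp(\varphi_N+t_1\epsilon_N-t_2\epsilon'_N)$ on $[0,1]^2$ (together with dominated convergence) justifies differentiating the ratio of integrals defining the Gibbs mean, and the standard computation gives $\phi'(t)=-\big(\langle f\,\epsilon'_N\rangle_{0,t}-\langle f\rangle_{0,t}\langle\epsilon'_N\rangle_{0,t}\big)$, i.e. minus a covariance. Since $\epsilon'_N\ge0$ a.s. we have $|\langle f\,\epsilon'_N\rangle_{0,t}|\le\|f\|_\infty\langle\epsilon'_N\rangle_{0,t}$ and $|\langle f\rangle_{0,t}\langle\epsilon'_N\rangle_{0,t}|\le\|f\|_\infty\langle\epsilon'_N\rangle_{0,t}$, hence $|\phi'(t)|\le 2\|f\|_\infty\langle\epsilon'_N\rangle_{0,t}$. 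Now the map $t\mapsto\langle\epsilon'_N\rangle_{0,t}$ is non-increasing, since its derivative equals $-\Var_{0,t}(\epsilon'_N)\le0$; therefore $\langle\epsilon'_N\rangle_{0,t}\le\langle\epsilon'_N\rangle_{0,0}$ for all $t\in[0,1]$, and integrating over $t$ yields $|\langle f\rangle_{0,1}-\langle f\rangle_{0,0}|\le 2\|f\|_\infty\langle\epsilon'_N\rangle_{0,0}$.

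The second difference is treated in the same way. Set $\psi(t):=\langle f(X)\rangle_{t,1}$, the Gibbs mean for $\varphi_N(X)+t\,\epsilon_N(X)-\epsilon'_N(X)$; then $\psi'(t)=\langle f\,\epsilon_N\rangle_{t,1}-\langle f\rangle_{t,1}\langle\epsilon_N\rangle_{t,1}$, which by $\epsilon_N\ge0$ is bounded in absolute value by $2\|f\|_\infty\langle\epsilon_N\rangle_{t,1}$. This time $t\mapsto\langle\epsilon_N\rangle_{t,1}$ is non-decreasing, its derivative being $\Var_{t,1}(\epsilon_N)\ge0$, so $\langle\epsilon_N\rangle_{t,1}\le\langle\epsilon_N\rangle_{1,1}$ on $[0,1]$. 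Integrating gives $|\langle f\rangle_{1,1}-\langle f\rangle_{0,1}|\le 2\|f\|_\infty\langle\epsilon_N\rangle_{1,1}$, and adding the two bounds proves the claim.

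The only point requiring care is the choice of interpolation order, which is dictated by the monotonicity signs: $\epsilon'_N$ must be switched on first, because its average can only decrease along that segment, so the natural reference measure for it is the untouched $\langle\cdot\rangle_{0,0}$; and $\epsilon_N$ must be switched on last, because its average can only increase, so the natural reference is the fully perturbed $\langle\cdot\rangle_{1,1}$. Beyond this, the differentiation under the integral sign is routine given the stated integrability hypothesis, so there is no genuine obstacle; the content of the argument is the bookkeeping of these signs.
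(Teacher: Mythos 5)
Your proof is correct and follows essentially the same route as the paper: interpolating along the path $(0,0)\to(0,1)\to(1,1)$, bounding the derivative of $\langle f\rangle$ by a covariance term controlled via the non-negativity of the perturbation, and using the monotonicity of $t\mapsto\langle\epsilon'_N\rangle_{0,t}$ (decreasing) and $t\mapsto\langle\epsilon_N\rangle_{t,1}$ (increasing) to anchor the bounds at $\langle\epsilon'_N\rangle_{0,0}$ and $\langle\epsilon_N\rangle_{1,1}$. The only cosmetic difference is that you bound the covariance by splitting it into two terms, while the paper bounds $\langle|\epsilon_N-\langle\epsilon_N\rangle|\rangle\le 2\langle\epsilon_N\rangle$; these are equivalent.
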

\begin{proof}
    For this, note that
    \begin{equation*}
        \frac{d}{dt_1} \langle \epsilon_N \rangle_{t_1,1} = \langle \epsilon_N^2 \rangle_{t_1,1} - \langle \epsilon_N \rangle_{t_1,1}^2 \geq 0. 
    \end{equation*}
    We then have that for all $t_1\in[0,1]$, $\langle \epsilon_N \rangle_{t_1,1} \leq \langle \epsilon_N \rangle_{1,1}$. Thus,
    \begin{equation*}
        \begin{split}
            \frac{d}{dt_1} \langle f \rangle_{t_1,1}  = \left\langle f \left(\epsilon_N - \langle \epsilon_N  \rangle_{t_1,1}\right)  \right\rangle_{t_1,1} & \leq \|f\|_\infty \left\langle \left|\epsilon_N - \langle \epsilon_N  \rangle_{t_1,1}\right|  \right\rangle_{t_1,1} \\
            & \leq 2 \|f\|_\infty \langle \epsilon_N \rangle_{t_1,1} \leq2 \|f\|_\infty \langle \epsilon_N \rangle_{1,1},
        \end{split}
    \end{equation*}
    which proves that $|\langle f \rangle_{1,1} - \langle f \rangle_{0,1}| \leq 2 \|f\|_\infty \langle \epsilon_N \rangle_{1,1}$.
    
    In a similar way, we have that
    \begin{equation*}
        \frac{d}{dt_2} \langle \epsilon_N \rangle_{0,t_2} = -\langle (\epsilon'_N)^2 \rangle_{0,t_2} + \langle \epsilon'_N \rangle_{0,t_2}^2 \leq 0. 
    \end{equation*}
    We then get that for all $t_2\in[0,1]$, $\langle \epsilon'_N \rangle_{0,t_2} \leq \langle \epsilon'_N \rangle_{0,0}$. Thus,
    \begin{equation*}
        \begin{split}
            \frac{d}{dt_2} \langle f \rangle_{0,t_2}  = \left\langle f \left(\langle \epsilon'_N  \rangle_{0,t_2} - \epsilon'_N\right)  \right\rangle_{0,t_2} & \leq \|f\|_\infty \left\langle \left|\epsilon'_N - \langle \epsilon'_N  \rangle_{0,t_2}\right|  \right\rangle_{0,t_2}\leq 2 \|f\|_\infty \langle \epsilon'_N \rangle_{0,0},
        \end{split}
    \end{equation*}
    which proves that $|\langle f \rangle_{0,1} - \langle f \rangle_{0,0}| \leq 2 \|f\|_\infty \langle \epsilon'_N \rangle_{0,0}$. This concludes the proof.
\end{proof}

If the small perturbation is a generic function $g$, $\epsilon_N$ and $\epsilon'_N$ can be taken to be equal to its positive $g^+(X):=g(X)\mathbb{1}_{g(x)\geq 0}$ and negative $g^-(X):=|g(X)|\mathbb{1}_{g(X)< 0}$ parts. Then,
\begin{equation}
    | \langle f \rangle_{1,1} - \langle f \rangle_{0,0} | \leq 2 \|f\|_\infty \big( \langle g^+ \rangle_{1,1} + \langle g^- \rangle_{0,0} \big) \leq 2 \|f\|_\infty \big( \langle |g| \rangle_{1,1} + \langle |g| \rangle_{0,0} \big).\label{ConcId}
\end{equation}

\section{Proofs of the main results}\label{sec:proofs}

\subsection{Proof of Theorem \ref{thm:asymptotic_mean} using the cavity method}

The proof of this theorem will be based on a cavity argument and the approximation result given by Proposition \ref{prop:pert_cont}. As in Section \ref{sec:main}, let $d\geq1$ and (for $N\geq d$) $\bar g = (g_1,\dots,g_{d-1},g_{d+1},\dots,g_N)\in\R^{N-1}$. Observe that for all $g\in\R^N$,
\begin{equation}\label{eq:change_denom}
    \frac{1}{\|g\|^2} = \frac{1}{\|\bar g\|^2} - \frac{g_d^2}{\|g\|^2 \|\bar g\|^2}.
\end{equation}
From this we have that
\begin{equation*}
    H_N(g) = NV_N + \frac{N}{\|g\|^2} ( \gamma_d - V_N ) g_d^2 \quad \mbox{with}\quad V_N := \sum_{i\leq N, \ i\neq d} \gamma_i \frac{g_i^2}{\|\bar g\|^2}.
\end{equation*}
Recall definition \eqref{eq:def_Hd} and let
\begin{equation}\label{eq:error_ham}
    \epsilon_N(g) := \left(\frac{N}{\|g\|^2}-1\right) (\gamma_d-V_N)  g_d^2 + (v-V_N)  g_d^2,
\end{equation}
with, as in Section \ref{sec:setting}, $v:=R_\rho(2\theta)$. We then have that
$H_N(g) = H_d(g) + \epsilon_N(g)$. Notice that if we remove the term $\epsilon_N$ from the Hamiltonian, the resulting Gibbs measure has the $d$-th coordinate decoupled from the rest. As in Section \ref{sec:main}, we will denote by $\langle \cdot \rangle_d$ the Gibbs mean of the measure defined by the Hamiltonian $H_d(g)$. The idea will then be to use Proposition \ref{prop:pert_cont} to connect the mean values of the original measure with the ones of the decoupled measure. For this we will need concentration bounds for $N/\|g\|^2$ and $V_N$ on both the original and decoupled measures.

\begin{lemma}[Norm concentration]\label{lem:conc_NNd}
    There exists some constant $K > 0$ such that
    \begin{equation*}
        \Big\langle\Big( \frac{N}{\|g\|^2} - 1 \Big)^2 \Big\rangle_d \leq \frac{K}{N}.
    \end{equation*}
\end{lemma}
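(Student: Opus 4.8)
The plan is to exploit the structure of the decoupled Hamiltonian $H_d(g) = \frac{N}{N-1} H_{N-1}(\bar g) - (v-\gamma_d) g_d^2$, which splits additively into a piece depending only on $\bar g\in\R^{N-1}$ and a piece depending only on $g_d$. Consequently, under $\langle\cdot\rangle_d$ the coordinate $g_d$ and the vector $\bar g$ are independent: $\bar g$ is distributed according to the Gibbs measure of $\frac{N}{N-1} H_{N-1}$ with respect to the standard Gaussian on $\R^{N-1}$, and $g_d$ is an independent centred Gaussian with variance $\sigma_{\gamma_d}^2 = 1/(1+2\theta(v-\gamma_d))$, which is a finite positive constant since $\theta\in T_\rho$ (by Lemma~\ref{lem:equiv_cond}, $2|\theta|(\tilde\gamma - v) < 1$, so the denominator is bounded away from $0$). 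Write $\|g\|^2 = \|\bar g\|^2 + g_d^2$.

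First I would observe that $\frac{N}{N-1}H_{N-1}$ is exactly the Hamiltonian of the original $(N-1)$-dimensional model at temperature $\theta' = \frac{N}{N-1}\theta$, since $H_{N-1}(\bar g)$ is homogeneous of degree $0$ in $\bar g$; equivalently, the law of $\|\bar g\|^2$ under the $\bar g$-marginal of $\langle\cdot\rangle_d$ does not see the Hamiltonian at all, because $H_{N-1}(\bar g)$ depends only on the direction $\bar g/\|\bar g\|$. Hence, by the same reasoning as in the proof of Lemma~\ref{lem:dist_NN}(i), the marginal of $\|\bar g\|^2$ under $\langle\cdot\rangle_d$ is $\chi^2$ with $N-1$ degrees of freedom, and $g_d^2 = \sigma_{\gamma_d}^2\, \xi$ with $\xi$ an independent $\chi^2$ with one degree of freedom. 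Therefore $\|g\|^2$ under $\langle\cdot\rangle_d$ has the law of $\chi^2_{N-1} + \sigma_{\gamma_d}^2\,\chi^2_1$ with the two summands independent.

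Then I would bound $\langle (N/\|g\|^2 - 1)^2\rangle_d$ directly. Since $\|g\|^2 \ge \|\bar g\|^2 \sim \chi^2_{N-1}$, we have $0 < N/\|g\|^2 \le N/\|\bar g\|^2$, and the scaled-inverse-$\chi^2_{N-1}$ variable $N/\|\bar g\|^2$ has mean $N/(N-3)$ and variance of order $1/N$ for $N$ large, by the explicit moment formulas for the inverse-$\chi^2$ distribution (exactly as invoked in Lemma~\ref{lem:dist_NN}(ii), now with $N-1$ in place of $N$). A clean way to finish: write
\[
\Big\langle\Big(\tfrac{N}{\|g\|^2}-1\Big)^2\Big\rangle_d = \Big\langle\Big(\tfrac{N}{\|g\|^2}-\tfrac{N}{\|\bar g\|^2} + \tfrac{N}{\|\bar g\|^2}-1\Big)^2\Big\rangle_d \le 2\Big\langle\Big(\tfrac{N}{\|\bar g\|^2}-\tfrac{N}{\|g\|^2}\Big)^2\Big\rangle_d + 2\Big\langle\Big(\tfrac{N}{\|\bar g\|^2}-1\Big)^2\Big\rangle_d.
\]
The second term is $O(1/N)$ by the $(N-1)$-dimensional version of Lemma~\ref{lem:dist_NN}(ii). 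For the first term, $\frac{N}{\|\bar g\|^2}-\frac{N}{\|g\|^2} = \frac{N g_d^2}{\|g\|^2\|\bar g\|^2} \le \frac{N g_d^2}{\|\bar g\|^4}$; using independence of $g_d$ (with bounded moments $\langle g_d^4\rangle_d \le 3\sigma_{\gamma_d}^4$) and of $\|\bar g\|^2\sim\chi^2_{N-1}$ (whose inverse fourth moment is $O(N^{-4})$), Cauchy--Schwarz gives $\langle N^2 g_d^4 / \|\bar g\|^8\rangle_d = N^2 \langle g_d^4\rangle_d\, \E[\chi_{N-1}^{-8}] = O(N^{-2})$, which is even smaller than needed.

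The only mild obstacle is making the independence/change-of-variable step rigorous, i.e. justifying that the $\bar g$-marginal of $\langle\cdot\rangle_d$ has $\|\bar g\|^2$ distributed as $\chi^2_{N-1}$; but this is immediate because $H_d$ factorizes and $H_{N-1}(\bar g)$ is scale-invariant in $\bar g$, so the argument of Lemma~\ref{lem:dist_NN}(i) applies verbatim. Everything else is a routine computation with $\chi^2$ moments, and the constant $K$ depends on $\theta$ only through the bounded quantity $\sigma_{\gamma_d}^2$, hence is uniform in $d$ and $N$.
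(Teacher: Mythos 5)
Your proposal is correct and follows essentially the same route as the paper: split $N/\|g\|^2-1$ via the identity $\tfrac{1}{\|g\|^2}=\tfrac{1}{\|\bar g\|^2}-\tfrac{g_d^2}{\|g\|^2\|\bar g\|^2}$, use the factorization of $H_d$ (so $g_d$ is an independent Gaussian with variance $\sigma_{\gamma_d}^2$ bounded by Lemma~\ref{lem:equiv_cond}, and $\|\bar g\|^2$ has $\chi^2$ law), and finish with inverse-$\chi^2$ moment bounds. The only cosmetic differences are your use of $(a+b)^2\le 2a^2+2b^2$ in place of the $L^2$ triangle inequality and your slightly more explicit justification of the $\chi^2_{N-1}$ marginal of $\|\bar g\|^2$ under $\langle\cdot\rangle_d$.
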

\begin{proof}
    We will first use that by \eqref{eq:change_denom} and the triangular inequality,
    \begin{equation}\label{eq:triang_ineq}
        \sqrt{\Big\langle\Big( \frac{N}{\|g\|^2} - 1 \Big)^2 \Big\rangle_d} \leq \sqrt{\Big\langle\Big( \frac{N}{\|\bar g\|^2} - 1 \Big)^2 \Big\rangle_d} + \sqrt{\Big\langle \frac{N^2 g_d^4}{\|g\|^4 \|\bar g\|^4} \Big\rangle_d}.
    \end{equation}
    The first term is $O(1/\sqrt{N})$ by Lemma \ref{lem:dist_NN} and the fact that $\langle f(\|\bar g\|)\rangle_d=\langle f(\|\bar g\|)\rangle$. For the second term of \eqref{eq:triang_ineq}, we have
    \begin{equation*}
        \Big\langle \frac{N^2 g_d^4}{\|g\|^4 \|\bar g\|^4} \Big\rangle_d \leq \Big\langle \frac{N^2 g_d^4}{\|\bar g\|^8} \Big\rangle_d = \frac{1}{N^2} \Big\langle \frac{N^4}{\|\bar g\|^8} \Big\rangle_d \langle g_d^4 \rangle_d.
    \end{equation*}
    And $\langle N^4/\|\bar g\|^8 \rangle_d = 1 + o_N(1)$ as it is the fourth moment of a scaled-inverse-$\chi^2$ random variable. Under the measure $\langle \cdot\rangle_d$, $g_d$ is distributed as a centred gaussian random variable of variance $\sigma_{\gamma_d}^2 = 1/(1 + 2 \theta (v - \gamma_d))$. Because $\theta\in T_\rho$, by Lemma \ref{lem:equiv_cond} we have that $1 + 2 \theta v > 2\theta\tilde \gamma$ and thus $\sigma_{\gamma_d}^2$ is uniformly bounded with respect to $N$ and $\gamma_d$. This means that $\langle g_d^4\rangle_d$ is uniformly bounded and thus the last term in \eqref{eq:triang_ineq} is $O(1/N)$ too. 
\end{proof}
\begin{lemma}[Concentration for decoupled model]\label{lem:conc_eng_d} 
    $\theta\in T_\rho\Rightarrow  \lim_{N\to\infty}\langle(V_N-v(\theta))^2\rangle_d = 0$.
\end{lemma}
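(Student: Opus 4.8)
The plan is to exploit the product structure of the decoupled measure. Since $H_d(g)=\tfrac{N}{N-1}H_{N-1}(\bar g)-(v-\gamma_d)g_d^2$ is a sum of a function of $\bar g$ alone and a function of $g_d$ alone, the Gibbs measure $\langle\cdot\rangle_d$ is a product measure, and because $V_N$ depends only on $\bar g$, the quantity $\langle (V_N-v(\theta))^2\rangle_d$ equals the expectation of $(V_N-v(\theta))^2$ under the $\bar g$-marginal of $\langle\cdot\rangle_d$. That marginal is proportional to $dG(\bar g)\exp(\theta_N H_{N-1}(\bar g))$, i.e.\ it is the Gibbs measure of the symmetric spherical model with $N-1$ spins, eigenvalue collection $\{\gamma_i^{(N)}\}_{i\le N,\ i\neq d}$, and inverse temperature $\theta_N:=\theta N/(N-1)$; moreover under this law $V_N$ is exactly the intensive energy $h_{N-1}:=H_{N-1}(\bar g)/(N-1)$ of that $(N-1)$-spin model. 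Denote by $\langle\cdot\rangle^{(N-1)}_\phi$ the Gibbs mean of this $(N-1)$-spin model at inverse temperature $\phi$, so that $\langle (V_N-v(\theta))^2\rangle_d=\big\langle (h_{N-1}-v(\theta))^2\big\rangle^{(N-1)}_{\theta_N}$.

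First I would check that this family of $(N-1)$-spin models, indexed by $M=N-1$, satisfies Hypothesis~\ref{hyp:eigenvalues} with the same limiting measure $\rho$: the eigenvalues stay in the fixed interval, and the empirical measure $\tfrac1{N-1}\sum_{i\neq d}\delta_{\gamma_i^{(N)}}$ differs from $\tfrac{N}{N-1}$ times $\tfrac1N\sum_{i\le N}\delta_{\gamma_i^{(N)}}$ only through the deletion of a single atom of mass $1/N$ lying in a fixed bounded interval, hence it still converges in $1$-Wasserstein distance to $\rho$. Therefore Lemma~\ref{lem:en_contr}, applied to this model at the \emph{fixed} temperature $\theta\in T_\rho$, gives $\big\langle (h_{N-1}-v(\theta))^2\big\rangle^{(N-1)}_{\theta}\to0$ as $N\to\infty$.

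It then remains to pass from inverse temperature $\theta$ to $\theta_N$. The key point is that $(\theta_N-\theta)H_{N-1}(\bar g)=\theta\,h_{N-1}(\bar g)\in[-|\theta|\tilde\gamma,|\theta|\tilde\gamma]$, since $0\le h_{N-1}\le\tilde\gamma$; consequently the Radon--Nikodym derivative of the $\theta_N$-Gibbs measure of the $(N-1)$-spin model with respect to its $\theta$-Gibbs measure is bounded above by $e^{2|\theta|\tilde\gamma}$. Hence
\begin{equation*}
    \langle (V_N-v(\theta))^2\rangle_d = \big\langle (h_{N-1}-v(\theta))^2\big\rangle^{(N-1)}_{\theta_N} \leq e^{2|\theta|\tilde\gamma}\,\big\langle (h_{N-1}-v(\theta))^2\big\rangle^{(N-1)}_{\theta},
\end{equation*}
and the right-hand side vanishes by the previous step, which gives the claim.

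All the estimates involved are elementary; the only points requiring care are the bookkeeping of the $N/(N-1)$ rescaling of the inverse temperature and the verification that deleting one eigenvalue does not disturb the $1$-Wasserstein convergence needed to invoke Lemma~\ref{lem:en_contr}. I do not expect a genuine obstacle here. As an alternative to the bounded Radon--Nikodym argument, one could instead observe that the proof of Lemma~\ref{lem:en_contr} goes through uniformly for inverse temperatures ranging over a compact subinterval of $T_\rho$ (using that pointwise convergence of the convex functions $f_N(\cdot)$ to $\tfrac12\int_0^{2\,\cdot}R_\rho$ is automatically locally uniform), which also covers the case $\theta_N\to\theta$; but the change-of-measure route is shorter.
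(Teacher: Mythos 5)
Your proof is correct, and it shares the paper's skeleton: both arguments identify the $\bar g$-marginal of $\langle\cdot\rangle_d$ with the $(N-1)$-spin spherical model at the shifted inverse temperature $\theta N/(N-1)$ (under which $V_N$ is exactly the intensive energy) and reduce the claim to the energy concentration of Lemma~\ref{lem:en_contr} for that smaller system. Where you genuinely diverge is in the transfer between the two temperatures. The paper interpolates between the Hamiltonians $(N-1)\theta V_N$ and $N\theta V_N$, bounds the $u$-derivative of the variance of $V_N$ and applies Gronwall, and then needs a separate argument (the free energy limit being the same along the interpolation) to identify $\lim_{N\to\infty}\langle V_N\rangle_d=v(\theta)$. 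You instead note that the exponents differ by $\theta h_{N-1}\in[-|\theta|\tilde\gamma,|\theta|\tilde\gamma]$, so the Radon--Nikodym derivative of the shifted measure with respect to the unshifted one is bounded by $e^{2|\theta|\tilde\gamma}$ uniformly in $N$; this controls the full second moment $\langle(V_N-v(\theta))^2\rangle_d$ in one stroke, with no need to treat variance and mean separately. Your route is more elementary, and it also makes explicit a point the paper leaves implicit, namely that the reduced eigenvalue family (one eigenvalue deleted) still satisfies Hypothesis~\ref{hyp:eigenvalues} with the same limit $\rho$, so that Lemma~\ref{lem:en_contr} legitimately applies at size $N-1$; since all eigenvalues lie in a fixed bounded interval, deleting one atom perturbs the empirical measure by $O(1/N)$ in the $1$-Wasserstein metric, as you say, so that verification is sound.
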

\begin{proof}
    For $u\in[0,1]$ define $\langle\cdot\rangle_u$ as the Gibbs measure associated to the Hamiltonian $(N-(1-u))\theta V_N$ with respect to the standard normal measure in $\R^{N-1}$. At $u=0$ this measure coincides with the original measure $\langle\cdot\rangle$ for a system of size $N-1$ while at $u=1$ it corresponds to the marginal of $\langle\cdot\rangle_d$ for the $N-1$ coordinates different from $d$. Observe that $d\langle (V_N-\langle V_N\rangle_u)^2\rangle_u/du \leq 3\tilde\gamma\theta \langle (V_N-\langle V_N\rangle_u)^2\rangle_u$. Then by Gronwall's inequality we have that $\langle (V_N-\langle V_N\rangle_d)^2\rangle_d \leq \exp({3\tilde\gamma\theta}) \langle (V_N-\langle V_N\rangle)^2\rangle$, and the right-hand side goes to $0$ by Lemma \ref{lem:en_contr}. Finally, $\lim_{N\to\infty}\langle V_N\rangle_d = v(\theta)$ because the limit of the free energy is the same for every value of $u\in[0,1]$ and when $u=0$, as we saw before, $\lim_{N\to\infty}\langle V_N\rangle = v(\theta)$.
\end{proof}

The intensive energy $h_N:=N^{-1}H_N$ and $V_N$ are upper bounded by $\tilde\gamma$. It is then easy to see that $\langle |h_N - V_N|\rangle \leq K'/N$ for some $K' >0$. Then, by Lemmas \ref{lem:dist_NN}, \ref{lem:bound_gN4}, and \ref{lem:conc_NNd} we have that when $\theta\in T_\rho$ and is finite there exists some finite $K > 0$ such that
\begin{equation}
    \langle |\epsilon_N|\rangle \leq K\Big(\frac{1}{\sqrt N}+\sqrt{a_N}\Big) \,\,\,\,\, \mbox{ and } \,\,\,\,\, \langle |\epsilon_N|\rangle_d \leq K\Big(\frac{1}{\sqrt N}+\sqrt {a_N}\Big).
\end{equation}
Then, by Proposition \ref{prop:pert_cont}, in particular \eqref{ConcId}, the Theorem \ref{thm:asymptotic_mean} is proved.

\subsection{Proof of Theorem \ref{thm:therm_var} by convex extension and Brascamp-Lieb's inequality}

The proof of this theorem is based on constructing a concave approximation of $H_N$ that coincides with it in a set of high probability. Then, we use Brascamp-Lieb's inequality for log-concave measures for the Gibbs measure associated to this approximation.

\begin{lemma}\label{lem:conv_ext}
    Suppose $2(9+\sqrt{17})\tilde\gamma|\theta| < 1$. Then, there exists some Hamiltonian $H'_N:\R^N\to\R$ with Hessian upper bounded in the Loewner order by $b\mathbb{I}$, $0< b < 1/2$, such that, if we denote by $\langle \cdot \rangle'$ the Gibbs measure associated to $H'_N$ w.r.t. the standard normal measure, there are constants $\alpha,K > 0$ such that for every bounded function $f:\R^N\to\R$
    \begin{equation*}
        \left|\langle f\rangle - \langle f\rangle'\right| \leq \|f\|_\infty K \exp({-\alpha N}),
    \end{equation*}
    with $K$ and $\alpha$ not depending on the function $f$. Furthermore, there exists some $\varepsilon > 0$ such that if $\|g\|^2 \geq (1-\varepsilon)N$ then $H_N(g) = H'_N(g)$ and for all $\delta >0$ $\sup_{g\in\R^N}H'_N(g)\leq (\tilde\gamma+\delta)N$.
\end{lemma}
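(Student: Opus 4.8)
The plan is to exploit that the Gibbs weight $\exp(-\tfrac12\|g\|^2+\theta H_N(g))$ fails to be log-concave only because the Hessian of $H_N$ blows up near the origin, and to cure this by a purely radial surgery that leaves $H_N$ untouched on the bulk region $\{\|g\|^2\ge(1-\varepsilon)N\}$; by Lemma~\ref{lem:dist_NN}$(i)$ this region carries all but an exponentially small fraction of the Gibbs mass, so the surgery will be invisible to means of bounded functions.

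First I would record a pointwise Hessian bound for $H_N$. Writing $D=\mathrm{diag}(\gamma_1,\dots,\gamma_N)$ and $\phi(g):=\langle g,Dg\rangle/\|g\|^2\in[0,\tilde\gamma]$, a direct differentiation gives
\[
\nabla^2 H_N(g)=\frac{2N}{\|g\|^2}\bigl(D-\phi(g)\mathbb{I}\bigr)-\frac{4N}{\|g\|^4}\bigl(w\,g^\intercal+g\,w^\intercal\bigr),\qquad w:=Dg-\phi(g)\,g.
\]
Since $w\perp g$, the rank-two term has operator norm $\|w\|\,\|g\|$; moreover $\|w\|^2=\|g\|^2\bigl(\sum_i\gamma_i^2 g_i^2/\|g\|^2-\phi(g)^2\bigr)=\|g\|^2\,\mathrm{Var}_g(\gamma)\le\tfrac14\tilde\gamma^2\|g\|^2$ by Popoviciu's inequality (the law of $\gamma$ under the weights $g_i^2/\|g\|^2$ lies in $[0,\tilde\gamma]$), and $\|D-\phi(g)\mathbb{I}\|\le\tilde\gamma$. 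Hence $\|\nabla^2 H_N(g)\|\le 4\,\tilde\gamma N/\|g\|^2$, so $\tfrac12\|g\|^2-\theta H_N(g)$ is strictly convex once $\|g\|^2>4\tilde\gamma|\theta|N$: the only obstruction to log-concavity sits in a ball strictly inside the sphere of radius $\sqrt N$.

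Next I would construct $H'_N$ by replacing $1/\|g\|^2$ in $H_N$ with $\eta(\|g\|^2)$, where, for $R^2:=(1-\varepsilon)N$ with $\varepsilon\in(0,1)$ to be fixed, $\eta$ equals $1/t$ on $[R^2,\infty)$ and, on $[0,R^2)$, is a bounded $C^2$ continuation matching $1/t$ to second order at $t=R^2$ — e.g. the polynomial glueing $\eta(t)=R^{-2}\bigl(2-2(t/R^2)^2+(t/R^2)^3\bigr)$, which satisfies $|\eta|\le2R^{-2}$, $|\eta'|\le2R^{-4}$, $|\eta''|\le4R^{-6}$ and $t\,\eta(t)\le1$ on $[0,R^2]$. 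Put $H'_N(g):=N\langle g,Dg\rangle\,\eta(\|g\|^2)$. Then $H'_N=H_N$ on $\{\|g\|^2\ge R^2\}$, and since $\langle g,Dg\rangle\le\tilde\gamma\|g\|^2$ and $t\,\eta(t)\le1$ we get $0\le H'_N(g)\le\tilde\gamma N$ everywhere, so the supremum bound holds (with any $\delta\ge0$). Expanding $\nabla^2(\langle g,Dg\rangle\eta(\|g\|^2))$ by the product rule and using these bounds together with $\|(Dg)g^\intercal\|\le\tilde\gamma\|g\|^2$, a careful (not term-by-term) estimate gives $\|\nabla^2 H'_N(g)\|\le 8\,\tilde\gamma/(1-\varepsilon)$ — on the ball from this estimate, off it from the bound on $\nabla^2 H_N$. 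Hence $\nabla^2(\theta H'_N)\preceq b\,\mathbb{I}$ with $b:=8|\theta|\tilde\gamma/(1-\varepsilon)$, and once $\varepsilon$ is chosen with $1-\varepsilon>16|\theta|\tilde\gamma$ one has $b<\tfrac12$ and $\tfrac12\|g\|^2-\theta H'_N$ is $(1-b)$-strongly convex.

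It remains to fix $\varepsilon$ and to prove the comparison. Since the unnormalised Lebesgue densities of $\langle\cdot\rangle$ and $\langle\cdot\rangle'$ coincide on $A^c$, $A:=\{\|g\|^2<(1-\varepsilon)N\}$, conditioning each measure on $A$ versus $A^c$ — their $A^c$-conditionals being identical — yields the elementary bound $|\langle f\rangle-\langle f\rangle'|\le 2\|f\|_\infty(\langle\mathbb{1}_A\rangle+\langle\mathbb{1}_A\rangle')$. By Lemma~\ref{lem:dist_NN}$(i)$ and \cite[Lemma~1]{laurent2000adaptive}, $\langle\mathbb{1}_A\rangle=\proba(\chi^2_N<(1-\varepsilon)N)\le e^{-\varepsilon^2 N/4}$; and since $0\le H'_N\le\tilde\gamma N$ (hence $Z_N\ge e^{-|\theta|\tilde\gamma N}$, so the denominator of $\langle\mathbb{1}_A\rangle'$ is not too small while its numerator is controlled by the same Gaussian small-ball estimate) one gets $\langle\mathbb{1}_A\rangle'\le 2\,e^{-(\varepsilon^2/4-|\theta|\tilde\gamma)N}$ for $N$ large. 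So $|\langle f\rangle-\langle f\rangle'|\le K\|f\|_\infty e^{-\alpha N}$ as soon as $\varepsilon>2\sqrt{|\theta|\tilde\gamma}$; this must be compatible with $1-\varepsilon>16|\theta|\tilde\gamma$ from the previous step, and the two ranges for $\varepsilon$ overlap precisely when $16|\theta|\tilde\gamma+2\sqrt{|\theta|\tilde\gamma}<1$, i.e. $|\theta|\tilde\gamma<\bigl((\sqrt{17}-1)/16\bigr)^2=1/\bigl(2(9+\sqrt{17})\bigr)$ — exactly the hypothesis. Fixing any admissible $\varepsilon$ then proves the lemma. I expect the \emph{main obstacle} to be this surgery Hessian estimate: $\eta$ must be tuned so that $\|\nabla^2 H'_N\|\le 8\,\tilde\gamma/(1-\varepsilon)$, and it is precisely the balance of this constant (doubled by the requirement $b<\tfrac12$) against the small-ball rate $\varepsilon^2/4$ that forces the constant $9+\sqrt{17}$, the larger root of $x^2-18x+64$. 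A secondary point: under $\langle\cdot\rangle'$ the law of $\|g\|^2$ is no longer exactly $\chi^2_N$, so the tail of $\langle\mathbb{1}_A\rangle'$ genuinely has to be controlled through the two-sided bound $0\le H'_N\le\tilde\gamma N$ and the lower bound on $Z_N$ rather than through an exact distribution.
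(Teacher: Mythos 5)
Your construction is correct in substance but follows a genuinely different route from the paper's. The paper never builds the modified Hamiltonian explicitly: it bounds the Hessian of $H_N$ crudely by $16\tilde\gamma N/\|g\|^2$, observes that $\theta H_N-a\|g\|^2/2$ with $a=16|\theta|\tilde\gamma/(1-\varepsilon-\delta)$ is concave on an annulus just inside the sphere, and invokes the abstract $\mathcal{C}^2$ concave-extension theorem of Yan to extend that function to the whole ball, the supremum bound coming from the extension theorem; the comparison of $\langle\cdot\rangle$ and $\langle\cdot\rangle'$ is then done by estimating $\int dG\,|e^{\theta H_N}-e^{\theta H'_N}|$ with the same Laurent--Massart small-ball bound you use, inside the same window $2\sqrt{|\theta|\tilde\gamma}<\varepsilon<1-16|\theta|\tilde\gamma$, so the constant $2(9+\sqrt{17})$ arises identically in both arguments (in the paper from the factor $16$ together with mere log-concavity, in yours from the sharper factor $8$ together with the requirement $b<\tfrac12$). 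Your explicit radial surgery, replacing $1/\|g\|^2$ by a $\mathcal{C}^2$ glueing $\eta(\|g\|^2)$, buys a self-contained construction with explicit constants, the stronger uniform bound $0\le H'_N\le\tilde\gamma N$, smoothness at the origin, and the stated Loewner bound with $b<\tfrac12$ for $\theta H'_N$; the paper's route avoids any Hessian computation for the modified Hamiltonian at the price of appealing to an external extension theorem and being non-constructive.

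Two steps should be written out. First, the bound you flag as the main obstacle, $\nabla^2(\theta H'_N)\preceq 8|\theta|\tilde\gamma(1-\varepsilon)^{-1}\mathbb{I}$ on the ball, is asserted rather than proved, and a term-by-term estimate with the bounds you list gives roughly $40\tilde\gamma/(1-\varepsilon)$, not $8$; it does hold, however, and follows from your own decomposition: writing $Dgg^\intercal+g(Dg)^\intercal=wg^\intercal+gw^\intercal+2\phi gg^\intercal$ with $\phi=\langle g,Dg\rangle/\|g\|^2$ and $w=Dg-\phi g\perp g$, one gets for unit $v$ that $v^\intercal\nabla^2 H'_N v\le N\tilde\gamma\,[\,2\eta+2t|\eta'|+t\max(0,8\eta'+4t\eta'')\,]$ with $t=\|g\|^2$, and for your cubic $8\eta'(t)+4t\eta''(t)=48R^{-4}s(s-1)\le0$ with $s=t/R^2$, so the bracket equals $R^{-2}(4+4s^2-4s^3)\le(4+\tfrac{16}{27})R^{-2}$, comfortably below $8R^{-2}$. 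Second, in bounding $\langle\mathbb{1}_A\rangle'$ you must handle the two signs of $\theta$ separately (for $\theta>0$ use $Z'_N\ge1$ and numerator $\le e^{\theta\tilde\gamma N}G(A)$; for $\theta<0$ use $Z'_N\ge e^{-|\theta|\tilde\gamma N}$ and numerator $\le G(A)$): paying the $e^{|\theta|\tilde\gamma N}$ loss in both numerator and denominator would force $\varepsilon>2\sqrt{2|\theta|\tilde\gamma}$ and shrink the admissible range below the stated threshold. With these two points made explicit, your proof is complete.
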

\begin{proof}
    First, notice that the Hessian $\mathcal{H}\in\R^{N\times N}$ of $H_N(g)$ (which exists for every $g\neq0$) has, for $i,j\in[N]$, components given by
    \begin{equation*}
        \mathcal{H}_{ij} = -4  (\gamma_i+\gamma_j-2h_N) \frac{N g_i g_j}{\|g\|^4} - \Big(2 (h_N-\gamma_i) \frac{N}{\|g\|^2} \Big) \delta_{ij}.
    \end{equation*}
    By naive bounds, we get that for every $v\in S^{N-1}(1)$
    \begin{equation}\label{eq:hessian_bound}
        v^\intercal \mathcal{H} v \leq \frac{16  \tilde\gamma N}{\|g\|^2}. 
    \end{equation}
    It is easy to check that because $2(9+\sqrt{17})\tilde\gamma|\theta| < 1$, there is some $\varepsilon > 0$ such that $2\sqrt{|\theta| \tilde \gamma} < \varepsilon < 1 - 16 |\theta| \tilde \gamma$. Choose some $\varepsilon >0$ in this way and let $\delta > 0$ be a small constant to be fixed later on. Define the events 
    \begin{align*}
        A_\varepsilon &:= \left\{g\in\R^N : (1-\varepsilon) N \leq \|g\|^2 \leq N\right\},\\
        B_{\varepsilon,\delta} &:= \left\{g\in\R^N : (1-\varepsilon-\delta) N < \|g\|^2 < (1 + \delta) N\right\}.
    \end{align*}
    Notice that if $\|g\|^2 \geq (1-\varepsilon-\delta)N$, by equation \eqref{eq:hessian_bound}, the norm of the Hessian is upper bounded by $a := 16|\theta| \tilde\gamma/(1-\varepsilon-\delta)$. Then, by the choice of $\varepsilon$, the function $\theta H_N-a\|g\|^2/2$ is $\mathcal{C}^2$ and has negative definite Hessian in $A_\varepsilon$ and $B_{\varepsilon,\delta}$ for all $\delta > 0$ small enough. Let $k:B_{\varepsilon,\delta} \to\R$ be the restriction of $\theta H_N-a\|g\|^2/2$ to $B_{\varepsilon,\delta}$. Because $A_\varepsilon$ is a compact set with open neighbourhood $B_{\varepsilon,\delta}$, from \cite[Theorem 3.2]{yan2014extension} and the details of its proof we can assess that there is a concave extension $\hat k$ of $k$ to the whole ball $B(0,\sqrt{N})$ that is $\mathcal{C}^2$, has negative definite Hessian, and that verifies
    \begin{equation}
        \sup_{g\in B(0,\sqrt{N})} \hat k(g) \leq \sup_{g\in B_{\varepsilon,\delta}} k(g) \leq -7 \tilde\gamma|\theta|N.
    \end{equation}
    This means that if we let $C_\varepsilon := \{g\in\R^N :  \|g\|^2 \leq (1-\varepsilon) N\}$ we can define a new Hamiltonian $H'_N(g) := \theta^{-1}(\hat k(g)+a\|g\|^2/2) \mathbb{1}_{C_\varepsilon} + H_N(g) \mathbb{1}_{C^c_\varepsilon}$
    that coincides with $H_N$ on $C^c_\varepsilon$, has a Hessian upper bounded in the Loewner order by $a\mathbb{I}$ on $\R^N$, and $\sup_{g\in C_\varepsilon} H'_N(g)$ is smaller or equal than $\tilde \gamma( 1 + \frac{8\delta}{1-\varepsilon-\delta}) N$. Fix $\delta > 0$ small enough so that 
    \begin{equation}\label{eq:fix_delta}
        |\theta| \tilde \gamma\Big( 1 + \frac{8\delta}{1-\varepsilon-\delta}\Big) < \frac{\varepsilon^2}{4}.
    \end{equation}
    
    Let us call $\langle \cdot \rangle'$ the Gibbs measure defined by the Hamiltonian $H'_N$ with respect to the standard normal measure on $\R^N$ and $Z'_N$ its partition function. We will now prove that for every bounded function $f(g)$ we have that $|\langle f\rangle - \langle f\rangle'|$ goes to $0$ as $N\to+\infty$. To show this, first note that for every $A,B,A',B'\in\R$ such that there exists some $C_1,C_2>0$ with $|A'/B'|\leq C_1$ and $1/B\leq C_2$, it holds that
    \begin{equation*}
        \left|\frac{A}{B}-\frac{A'}{B'}\right| \leq C_2|A-A'| + C_1C_2|B-B'|.
    \end{equation*}
    Because $H_N(g) \geq 0$ for all $g\in\R^N$, we have that $Z_N\geq1$ and because $f(g)$ is bounded we have $\langle f \rangle' \leq \|f\|_\infty$. Then, to prove that $|\langle f\rangle - \langle f\rangle'|$ vanishes it is enough to see that
    \begin{equation*}
        \int dG(g) \big|\exp \theta H_N(g) - \exp \theta H'_N(g)\big|\xrightarrow{N\to\infty} 0.
    \end{equation*}
    And because both Hamiltonians only differ on $C_\varepsilon$ and are bounded on it, we have that
    \begin{equation*}
        \int dG \big|e^{\theta H_N} - e^{\theta H'_N}\big| \leq (e^{\theta\tilde\gamma N}+e^{|\theta| \tilde \gamma + \frac{8\delta|\theta| \tilde \gamma}{1-\varepsilon-\delta}}) \int_{C_\varepsilon}dG \leq (e^{\theta\tilde\gamma N}+e^{|\theta| \tilde \gamma + \frac{8\delta|\theta| \tilde \gamma}{1-\varepsilon-\delta}}) e^{-\frac{\varepsilon^2}{4}N},
    \end{equation*}
    where we used Laurent-Massart's bound \cite[Lemma 1]{laurent2000adaptive} for the inequality. By the choice of $\varepsilon$ and $\delta$,  there is some $\alpha > 0$ such that the right-hand side is equal to $\exp(-\alpha N)$. 
\end{proof}

Because the norm of the Hessian of the Hamiltonian $H_N'$ is strictly smaller than $1/2$ then the resulting Gibbs measure is log-concave. The conclusion of Theorem \ref{thm:therm_var} then follows directly from Lemma \ref{lem:conv_ext} and Brascamp-Lieb's inequality \cite[Theorem 4.1]{brascamp1976extensions}.

\end{document}